\begin{document}
\newtheorem{theorem}{Theorem}
\newtheorem{lemma}{Lemma}
\newtheorem{corollary}{Corollary}
\newtheorem{definition}{Definition}
\newtheorem{conjecture}{Conjecture}
\newtheorem{identity}{Identity}
\newtheorem{axiom}{Axiom}
\newtheorem{problem}{Problem}
\newtheorem{prop}{Proposition}
\newtheorem*{axiom0}{Axiom 0}
\newtheorem*{axiom1}{Axiom 1}

\theoremstyle{remark}
\newtheorem{remark}{Remark}
\newtheorem*{ack}{Acknowledgements}
\newtheorem*{comm}{Comments}
\newtheorem*{aside}{Aside}

\newcommand{\norm}[1]{\left\Vert#1\right\Vert}
\newcommand{\abs}[1]{\left\vert#1\right\vert}
\newcommand{\set}[1]{\left\{#1\right\}}
\newcommand{\sph}{\mathbb{S}}
\newcommand{\df}{\mathrm{d}}
\newcommand{\bmu}{\bar{\mu}}
\renewcommand{\div}{\text{div}}
\newcommand{\divm}{\text{\emph{div}}}

\newcommand{\II}{\operatorname{{I\hspace{-0.2mm}I}}}
\newcommand{\trSigmak}{\tr_\Sigma\!k}

\newcommand{\tr}{{\mbox{tr}}}
\newcommand{\real}{{\bf R}}
\newcommand{\Real}{{\bf R}}
\newcommand{\Ric}{\mbox{Ric}}
\newcommand{\Hess}{\mbox{Hess}}

\newcommand{\R}{\mathbb{R}}
\newcommand{\RP}{\mathbb{RP}}
\newcommand{\Z}{\mathbb{Z}}
\newcommand{\Hcal}{{\mathcal{H}}}
\newcommand{\Ccal}{{\mathcal{C}}}
\newcommand{\Hscr}{{\mathscr{H}}}

\newcommand{\inv}{^{-1}}
\newcommand{\ol}{\overline}
\newcommand{\es}{\emptyset}
\newcommand{\sm}{\setminus}

\title{On curves with nonnegative torsion}
\author{Hubert L. Bray}
\address{Dept. of Mathematics,
Duke University,
Durham, NC 27708}
\email{bray@math.duke.edu}
\thanks{The first named author was supported in part by NSF grant \#DMS-1007063.}

\author{Jeffrey L. Jauregui}
\address{Dept. of Mathematics,
Union College,
Schenectady, NY 12308}
\email{jaureguj@union.edu}

\begin{abstract}
We provide new results and new proofs of results about the torsion of curves in $\R^3$.   Let $\gamma$ be a smooth curve in $\R^3$ that is the graph over a 
simple closed curve in $\R^2$ with positive curvature.  We give a new proof that if $\gamma$ has nonnegative (or nonpositive) torsion, then $\gamma$ has zero 
torsion and hence lies in a plane.  Additionally, we prove the new result that a simple closed plane curve, without any assumption on its curvature, cannot be perturbed to
a closed space curve of constant nonzero torsion.
We also prove similar statements for curves in Lorentzian $\R^{2,1}$ which are related to important open questions 
about time flat surfaces in spacetimes and mass in general relativity. 
\end{abstract}

\maketitle

\section{Introduction}

The curvature and torsion of curves in $\R^3$ are defined by the Frenet formulas found in most undergraduate differential geometry texts 
(see \cite{docarmo}, for instance).  A curve in $\R^3$ with positive constant curvature and nonzero constant torsion must be a helix, 
seen in figure \ref{fig:coil}.  
Positive torsion is what causes the helix to screw in some direction, which in the short run prevents a curve with 
positive torsion from closing up on itself. However, as also shown in figure \ref{fig:coil}, in the long run a curve with positive torsion can circle around and then close up on itself.  
In fact, Weiner even found examples of constant positive torsion curves in $\R^3$ (with non-constant positive curvature) which close up on themselves \cite{weiner} (cf. \cite{bates_melko}).

Nevertheless, there is a clear intuition about torsion to try to exploit here:  positive torsion tends to cause curves to screw in some direction, thereby preventing the curve from being closed.  Hence, it is reasonable to conjecture that a closed curve in $\R^3$ cannot have positive torsion everywhere, as long as an additional hypothesis is added which rules out counterexamples like the one in figure \ref{fig:coil}.  
The additional hypothesis we add, that $\gamma$ is a graph over a simple closed curve in $\R^2$ with positive curvature, is shown in figure \ref{fig:graph}.  
Our first theorem, which is a corollary of results in \cite{sedykh}, \cite{TU},  and \cite{ghomi}, verifies this conjecture:

\begin{theorem}\label{thm:R3}
Let $\gamma$ be a smooth curve in $\R^3$ which is the graph over a simple closed curve in $\R^2$ with positive curvature.  If $\gamma$ has nonnegative (or nonpositive) torsion, then $\gamma$ has zero torsion and hence lies in a plane.  
\end{theorem}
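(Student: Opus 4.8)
The plan is to work directly with the sign of the torsion, using an integral identity in which the height function of the graph appears as a free periodic function. Since $\sigma$ is a simple closed curve of positive curvature it is strictly convex, so it may be parametrized by the angle $\theta\in\R/2\pi\Z$ that its outward unit normal $\nu$ makes with a fixed axis; write $\gamma(\theta)=(\sigma(\theta),h(\theta))$ with $h$ smooth and $2\pi$-periodic, and let dots denote $d/d\theta$. Let $p(\theta)>0$ be the support function of $\sigma$ with respect to a point in the interior of the region it bounds, so that $\rho:=p+\ddot p>0$ is the radius of curvature and $\dot\sigma=\rho\nu^\perp$, $\ddot\sigma=\dot\rho\,\nu^\perp-\rho\nu$, $\dddot\sigma=(\ddot\rho-\rho)\nu^\perp-2\dot\rho\,\nu$. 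A direct computation of the scalar triple product then gives
\begin{equation*}
f(\theta):=\det[\dot\gamma,\ddot\gamma,\dddot\gamma]=\rho^2\,\dddot h-2\rho\dot\rho\,\ddot h+\bigl(2\dot\rho^2-\rho\ddot\rho+\rho^2\bigr)\dot h ,
\end{equation*}
and $\dot\gamma\times\ddot\gamma$ has third component $\dot\sigma_1\ddot\sigma_2-\dot\sigma_2\ddot\sigma_1=\rho^2>0$. In particular $\gamma$ has nowhere-vanishing curvature, its torsion $\tau$ is a well-defined smooth function with $\operatorname{sign}\tau=\operatorname{sign}f$, and a regular space curve with $\tau\equiv 0$ lies in a plane; so it suffices to show that $f\ge 0$ on $\R/2\pi\Z$ forces $f\equiv 0$ (the nonpositive case being identical).

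The heart of the argument is the claim that $\int_0^{2\pi}\bigl(p/\rho^{3}\bigr)\,f\,d\theta=0$ for every periodic $h$. Granting this, positivity of the weight $p/\rho^3$ together with $f\ge 0$ forces $f\equiv 0$, which finishes the proof. To prove the claim, integrate by parts three times to move all derivatives off $h$; the boundary terms vanish by periodicity, and writing $w=p/\rho^3$ one finds $\int_0^{2\pi}(p/\rho^3)f\,d\theta=-\int_0^{2\pi}\Phi'\,h\,d\theta$, where
\begin{equation*}
\Phi=\rho^2 w''+6\rho\dot\rho\,w'+\bigl(6\dot\rho^2+3\rho\ddot\rho+\rho^2\bigr)w .
\end{equation*}
Thus it is enough to check that $\Phi$ is constant. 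Here the substitution $w=z\rho^{-3}$ is the decisive device: upon substituting, every term except $\rho^{-1}\ddot z$ and $\rho^{-1}z$ cancels, leaving $\Phi=\rho^{-1}(\ddot z+z)$. Taking $z=p$ gives $\Phi=\rho^{-1}(\ddot p+p)=\rho^{-1}\rho=1$, so $\Phi$ is constant and the identity follows. Conceptually, the requirement ``$\Phi$ constant'' is a linear second-order ODE for $w$; the substitution $w=z\rho^{-3}$ turns it into $\ddot z+z=\mathrm{const}\cdot\rho$, and a positive periodic solution is precisely the support function of the convex curve whose radius of curvature is $\rho$, namely $\sigma$ itself.

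I expect the main obstacle to be discovering the weight: recognizing that the third-order constraint which forces $\int (wf)\,d\theta=0$ for all $h$ admits a strictly positive solution, and that it is $p/\rho^3$. Once the substitution $w=z\rho^{-3}$ is in hand, everything reduces to the elementary fact $\ddot p+p=\rho>0$, and the triple-product computation and the integrations by parts are purely mechanical. The positivity of $p$ (hence of the weight) uses only that the support function is measured from an interior point, and the reduction to $f\ge 0\Rightarrow f\equiv 0$ uses only that $\gamma$ automatically has positive curvature, so $\tau$ is everywhere defined.

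As an alternative, one can deduce the theorem from the four-vertex-type results of \cite{sedykh}, \cite{TU}, and \cite{ghomi}: since $\sigma$ is strictly convex, the vertical plane through the tangent line of $\sigma$ at $\sigma(\theta_0)$ is a supporting plane of the convex hull $K=\operatorname{conv}(\gamma)$ meeting $\gamma$ only at $\gamma(\theta_0)$, so $\gamma\subset\partial K$; the cited results then imply that the torsion of such a $\gamma$ must change sign (not merely vanish) at least four times unless $\gamma$ is planar, which is incompatible with $\tau\ge 0$. I would, however, lead with the self-contained integral identity above.
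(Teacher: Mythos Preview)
Your proof is correct and follows essentially the same architecture as the paper's: parametrize by an angle variable, express the triple product $\det[\dot\gamma,\ddot\gamma,\dddot\gamma]$ in terms of $h$ and the radius of curvature, and then produce a strictly positive weight against which the integral of this quantity vanishes identically for every periodic $h$. In both arguments the weight arises from a positive $2\pi$-periodic solution of $\ddot z+z=\rho$ (equivalently $f''+f=1/\kappa_0$ in the paper's notation); indeed your $p$ plays exactly the role of the paper's $f$, since your triple product equals $\rho^3\bigl[(\kappa_0 h')''+\kappa_0 h'\bigr]$ and your weight $p/\rho^3$ cancels the $\rho^3$.

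The one genuine difference is how that positive solution is found. The paper writes down an explicit convolution kernel $k(\beta)=\tfrac{\beta\sin\beta}{2\pi}$ on $(-\pi,\pi)$, defines $f$ as $\int k(\beta)\,\rho(\beta+\theta+\pi)\,d\beta$, and verifies by direct computation (using the closing conditions $\int\rho\cos\theta=\int\rho\sin\theta=0$) that this $f$ is positive and solves the ODE. You instead recognize the solution geometrically as the support function of the convex base curve measured from an interior point, for which $\ddot p+p=\rho$ is the standard identity and $p>0$ is immediate. Your identification is cleaner and more conceptual; the paper's construction, on the other hand, is self-contained and makes explicit where the closing conditions enter. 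Either way the argument is the same once the positive weight is in hand, and your alternative route via the convexity/four-vertex results of \cite{sedykh}, \cite{TU}, \cite{ghomi} is also noted in the paper.
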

A similar result, which is also a natural adaptation of results in \cite{sedykh}, \cite{TU},  and \cite{ghomi}, is true in Lorentzian $\R^{2,1}$, which has the metric $dx^2 + dy^2 - dt^2$.  In what follows, a tangent vector $v$ is spacelike if the metric
evaluates to be positive on $v$, and a submanifold is spacelike if its nonzero tangent vectors are spacelike.
\begin{theorem}\label{thm:R21}
Let $\gamma$ be a smooth simple closed curve in $\R^{2,1}$ with spacelike curvature vector which lies in a complete spacelike hypersurface.  If $\gamma$ has nonnegative (or nonpositive) torsion, then $\gamma$ has zero torsion and hence lies in a plane.  
\end{theorem}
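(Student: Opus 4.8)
The plan is to derive Theorem~\ref{thm:R21} from Theorem~\ref{thm:R3} by reinterpreting $\gamma$ as an ordinary curve in Euclidean $\R^3$. The key observation is that the \emph{sign} of the torsion of a space curve is detected by the purely affine quantity $\det(\gamma',\gamma'',\gamma''')$, which involves no metric at all: in $\R^3$ one has $\tau_E = \det(\gamma',\gamma'',\gamma''')/|\gamma'\times\gamma''|^2$, and the same determinant governs the sign of the Lorentzian torsion $\tau_L$. To make this precise I would parametrize $\gamma$ by Lorentzian arc length $s$ (permissible since $\gamma$, lying in a spacelike hypersurface, is spacelike), so that $\gamma'$ is spacelike of unit length and $\gamma'' \perp \gamma'$. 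The hypothesis that the curvature vector is spacelike says $\gamma''$ is a nonzero spacelike vector; hence $\{\gamma',\gamma''\}$ spans a spacelike $2$-plane and is linearly independent, so $\gamma$ also has nowhere-vanishing Euclidean curvature and a well-defined $\tau_E$. Writing $\gamma'' = \kappa M$ with $\kappa = |\gamma''|_L > 0$ and $M$ spacelike of unit length, and letting $B$ be the unit timelike vector orthogonal to $\mathrm{span}(T,M)$ (where $T = \gamma'$), the Lorentzian Frenet equations read $T' = \kappa M$, $M' = -\kappa T + \tau_L B$, $B' = \tau_L M$, from which $\det(\gamma',\gamma'',\gamma''') = \kappa^2\,\tau_L\,\det(T,M,B)$. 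Since $\det(T,M,B)$ is continuous and nowhere zero along $\gamma$, it has constant sign; thus $\tau_E$ and $\tau_L$ agree in sign at every point up to a single global $\pm$, so in particular, if $\tau_L$ is everywhere $\ge 0$ (or everywhere $\le 0$), then $\tau_E$ is everywhere $\ge 0$ or everywhere $\le 0$.

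Next I would check that $\gamma$, regarded as a curve in $\R^3$, is the graph of a function over a simple closed plane curve of positive curvature. Write $\gamma = (\sigma, \gamma_3)$, where $\sigma$ is the projection of $\gamma$ to the $xy$-plane and $\gamma_3$ is its $t$-coordinate. From $\langle\gamma',\gamma'\rangle_L = 1$ we get $|\sigma'|^2 = 1 + (\gamma_3')^2 > 0$, so $\sigma$ is an immersed closed curve; differentiating this identity and using that $\gamma''$ is spacelike (so $|\sigma''|^2 > (\gamma_3'')^2$), a short computation shows $\sigma''$ is never zero and never a multiple of $\sigma'$, i.e., $\sigma$ has nowhere-vanishing geodesic curvature. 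To see that $\sigma$ is moreover \emph{embedded}, I would invoke the fact that a complete spacelike hypersurface in $\R^{2,1}$ is an entire graph $t = h(x,y)$ with $|\nabla h| < 1$ (the projection $\pi$ to $\R^2$ is $1$-Lipschitz on the hypersurface, so completeness forces the graph to be entire); consequently $\pi$ restricts to a diffeomorphism of the hypersurface onto $\R^2$, so $\pi|_\gamma$ is injective and $\sigma$ is a simple closed curve. A simple closed plane curve with nowhere-vanishing curvature is convex, i.e., has positive curvature after a choice of orientation, and $\gamma = \{(\sigma(s),\gamma_3(s))\}$ is precisely the graph of $\gamma_3$ over $\sigma$.

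The proof then concludes immediately: Theorem~\ref{thm:R3} applies to $\gamma$ in $\R^3$, with whichever sign of $\tau_E$ occurs, and yields $\tau_E \equiv 0$; hence $\det(\gamma',\gamma'',\gamma''') \equiv 0$, hence $\tau_L \equiv 0$ by the identity above; and a curve of identically vanishing torsion lies in an affine plane, which is the same subset whether the ambient space is taken to be $\R^3$ or $\R^{2,1}$. I expect the one genuine difficulty to be the step in the second paragraph passing from ``$\sigma$ is locally convex'' to ``$\sigma$ is embedded and convex'': local convexity by itself permits self-overlapping curves of higher turning number, so some global input is unavoidable, and the natural one here is the structure theorem identifying complete spacelike hypersurfaces of $\R^{2,1}$ with entire graphs. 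A more self-contained route would be to transplant the arguments of \cite{sedykh}, \cite{TU}, and \cite{ghomi} into the Lorentzian setting directly, showing that such a $\gamma$ has torsion changing sign at least four times unless it is planar; but the reduction above seems to be the shortest path.
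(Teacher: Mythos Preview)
Your argument is correct and follows a genuinely different path from the paper's section~\ref{sec:R21}. The paper reproves Theorem~\ref{thm:R21} in parallel with Theorem~\ref{thm:R3}: it recomputes the torsion formula (\ref{eqn_tor_nice}) with the Lorentzian cross product, obtains $((\kappa_0 h')')^2 + (\kappa_0 h')^2 - 1$ in place of the corresponding Euclidean term (the sign flips because $\gamma'\times\gamma''$ is now timelike), and then invokes Lemma~\ref{lem:kernel} to get the analogous integral identity (\ref{eqn:avg21}). Your reduction via the affine invariant $\det(\gamma',\gamma'',\gamma''')$ is cleaner and avoids repeating that calculation; indeed, the paper itself notes this invariance at the end of the introduction as the reason the prior four-vertex results carry over to $\R^{2,1}$, but then opts for the direct route. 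The paper's approach has the modest advantage of producing the Lorentzian identity (\ref{eqn:avg21}) explicitly, which can be read as a ``weighted total torsion'' formula in its own right. Your argument that the projection $\sigma$ has nonvanishing curvature (the span of $\gamma',\gamma''$ is spacelike, hence meets the vertical axis only in $0$, so $\sigma',\sigma''$ are independent) is also a pleasant alternative to the paper's osculating-circle Lemma~\ref{lemma_osc}.

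One small slip in your parenthetical: the projection $\pi$ from a spacelike hypersurface to $\R^2$ is \emph{not} $1$-Lipschitz but rather distance non-decreasing, since for a tangent vector $v=(a,b,c)$ one has $|\pi(v)|_E^2 = a^2+b^2 \ge a^2+b^2-c^2 = |v|_L^2$. This is in fact exactly what makes the covering-map argument work: lifts of paths in $\R^2$ are no longer than the paths themselves, so completeness of the hypersurface guarantees path-lifting, and simple connectivity of $\R^2$ then forces $\pi$ to be a diffeomorphism. Your conclusion (that $\sigma$ is embedded) is unaffected, and the paper simply cites \cite{harris} for this structure result.
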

Note that a complete spacelike hypersurface in $\R^{2,1}$ must be a graph over $\R^2$ (see \cite{harris}, for instance), so that $\gamma$ is again a graph over a simple closed curve in $\R^2$, which turns out to have positive curvature, by the spacelike curvature vector condition (cf. Lemma \ref{lemma_osc}).  Whereas the hypotheses of Theorem \ref{thm:R3} are somewhat \emph{ad hoc}, the hypotheses of Theorem \ref{thm:R21} are quite natural.
We provide new proofs of these theorems in sections \ref{sec:R3} and \ref{sec:R21}.  These new proofs are very efficient and provide a new way of understanding these results.  We also show in section \ref{sec:winding} that these results continue to hold if the projection of $\gamma$ is allowed to wind around the plane curve multiple times.

\begin{figure}
\begin{center}
\includegraphics[height=1in]{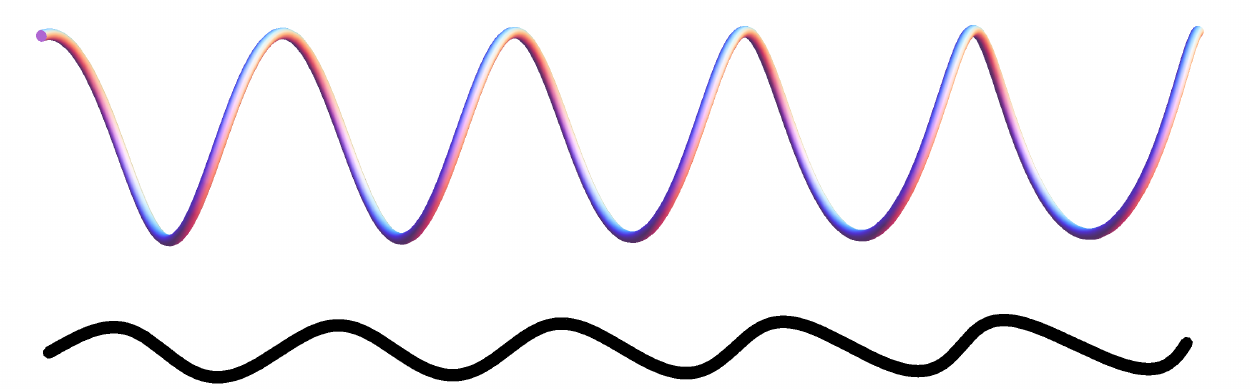}
\hspace{0.1in}
\includegraphics[height=1.5in]{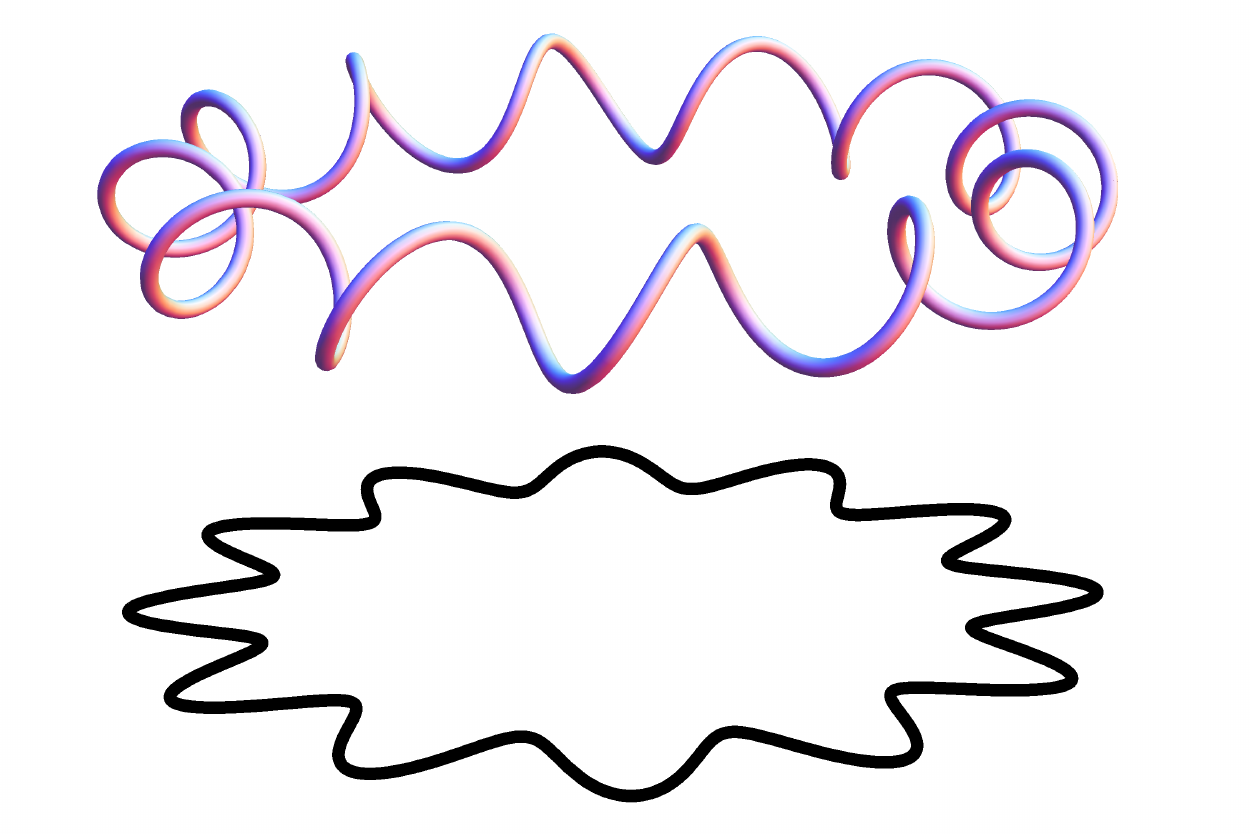}
\caption{\small Two curves in $\R^3$ and their projections to the $xy$ plane.  A curve in $\R^3$ with positive constant curvature and nonzero constant torsion must be a helix, as shown on the left.  This example suggests the conjecture that positive torsion, which is what causes the helix to screw in some direction, should prevent the curve from closing up on itself, at least in the short run.  However, the curve on the right shows how a closed curve in $\R^3$ can have positive torsion everywhere.  Hence, the conjecture will need to exclude this example. \label{fig:coil}}
\end{center}
\end{figure}

In section \ref{sec_rigidity}, we prove a new rigidity result for constant torsion curves (separate from Theorems \ref{thm:R3} and \ref{thm:R21}).  The precise statement is Theorem \ref{thm:rigidity}, but we state an informal version here:

\begin{theorem}
\label{thm:perturb}
A simple closed plane curve, not necessarily convex, cannot be perturbed in the $C^3$ sense to a space curve of constant nonzero torsion.
\end{theorem}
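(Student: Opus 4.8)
I read Theorem~\ref{thm:rigidity} as the statement that there is no $C^1$ family $\{\gamma_\epsilon\}_{\epsilon\in(-\delta,\delta)}$ of closed $C^3$ space curves with $\gamma_0$ the given simple closed plane curve and each $\gamma_\epsilon$ of constant torsion $\tau_\epsilon$, for which $\dot\tau(0):=\tfrac{d}{d\epsilon}\big|_0\tau_\epsilon\neq 0$; i.e.\ along any such family the torsion is critical at the plane curve, so no first‑order deformation moves it into constant nonzero torsion. Place $\gamma_0$ in $\{z=0\}$, parametrize every curve by the arc length $s$ (of length $L$) of $\gamma_0$, so $\gamma_\epsilon=(\alpha_\epsilon,z_\epsilon)$ with $\gamma_0=(\alpha_0,0)$, and let $h:=\tfrac{\partial}{\partial\epsilon}\big|_0 z_\epsilon$ be the first‑order vertical displacement, a smooth periodic function; write $k_0$ for the signed curvature of $\gamma_0$. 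One should first note that the convex case is free: if $\gamma_0$ has positive curvature then every nearby $\gamma_\epsilon$ is a graph over a convex plane curve, so Theorem~\ref{thm:R3} already forbids $\tau_\epsilon\neq 0$. The content is the non‑convex case, where $k_0$ has zeros (inflection points).

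\emph{Linearize the constant‑torsion condition.} Writing torsion as $N_\epsilon(s)/D_\epsilon(s)$ with $N_\epsilon=(\gamma_\epsilon'\times\gamma_\epsilon'')\cdot\gamma_\epsilon'''$ and $D_\epsilon=|\gamma_\epsilon'\times\gamma_\epsilon''|^2$, constancy of torsion is the identity $N_\epsilon=\tau_\epsilon D_\epsilon$ between smooth functions of $(s,\epsilon)$, valid even where $D_0=k_0^2$ vanishes. Differentiating at $\epsilon=0$ and using $\tau_0\equiv 0$ kills the $\partial_\epsilon D_\epsilon$ term (which would involve the horizontal part of the variation), and a short Frenet computation gives
\[ k_0\,h''' - k_0'\,h'' + k_0^3\,h' \;=\; c\,k_0^2,\qquad c:=\dot\tau(0), \]
equivalently, away from inflection points, $(h''/k_0)' + k_0\,h' = c$. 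So only the vertical part of the perturbation matters at first order. One checks that at a nondegenerate inflection the equation forces $h''=0$, whence $v:=h''/k_0$ extends to a smooth periodic function, and with $u:=h'$ one obtains the first‑order linear system $u'=k_0 v$, $v'=-k_0 u + c$.

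\emph{Solve and read off the obstruction.} Complexifying, $w:=u+iv$ satisfies $w'+ik_0 w=ic$, so with the turning angle $\Phi(s):=\int_0^s k_0$ one gets $\big(e^{i\Phi}w\big)'=ic\,e^{i\Phi}$ and hence $w(s)=e^{-i\Phi(s)}\big(w(0)+ic\int_0^s e^{i\Phi}\big)$. Since $\Phi(L)=2\pi$ and, because $e^{i\Phi(s)}$ is the unit tangent $T_0(s)$ up to a fixed rotation, $\oint e^{i\Phi}\,ds=0$, the periodicity of $w$ (equivalently of $h'$ and of $h''/k_0$) holds automatically and is vacuous. The decisive point is that $h$ itself must be periodic, i.e.\ $\oint u\,ds=0$; expanding this by Fubini and an integration by parts and noting that $F(s):=\int_0^s e^{i\Phi}$ is a rotated, translated copy of $\gamma_0$ — a closed curve enclosing area $\operatorname{Area}(\gamma_0)\neq 0$ — the condition collapses, via the standard area formula $\int_0^L F'\overline F\,ds=2i\operatorname{Area}(F)$, to $c\cdot\operatorname{Area}(\gamma_0)=0$. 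Hence $c=\dot\tau(0)=0$.

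\emph{Main obstacle.} The difficulty is entirely at the inflection points of the non‑convex $\gamma_0$: there the torsion formula is a genuine $0/0$, the linearized ODE degenerates, and — worse — the obvious periodicity constraint is empty, so the rigidity is invisible at that level and only surfaces once one tracks the periodicity of $h$ rather than $h'$, which is where the enclosed area enters. Two further points need care: degenerate inflections (higher‑order zeros of $k_0$), to be handled by a regularity argument for $h$ or a density reduction; and, should one want the conclusion for a merely $C^3$‑convergent sequence of constant‑torsion curves rather than a differentiable family, one can replace the linearization by the exact identity $\oint \kappa\,T\,ds=\tau\oint B\,ds$ obtained by integrating $N'=-\kappa T+\tau B$ around the curve — its left side converges to a horizontal vector and its right side to a vertical one, forcing $\tau_n\to 0$ whenever $\oint_{\gamma_0}|k_0|\,T_0\,ds\neq 0$, with the residual cases (in particular every convex $\gamma_0$) disposed of as above.
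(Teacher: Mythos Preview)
Your reinterpretation of the theorem as \emph{first-order} rigidity along a $C^1$ family --- that $\dot\tau(0)=0$ --- is strictly weaker than what is actually claimed. The precise statement (Theorem~\ref{thm:rigidity}) concerns an arbitrary $C^3$-convergent \emph{sequence} $\gamma_n$ of closed curves with constant torsion $\tau_n\neq 0$, and concludes that the limit cannot be embedded. Your linearized computation, while correct as far as it goes (the identity $k_0 h'''-k_0'h''+k_0^3h'=c\,k_0^2$ and the reduction of $\oint h'\,ds=0$ to $c\cdot\mathrm{Area}(\gamma_0)=0$ both check out), only shows that the torsion is critical at the plane curve along a differentiable family. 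This does not exclude a family with, say, $\tau_\epsilon=\epsilon^2$, nor a sequence not coming from any differentiable family. Your closing remark about integrating $N'=-\kappa T+\tau B$ would at best yield $\tau_n\to 0$ under the side condition $\oint_{\gamma_0}|k_0|\,T_0\,ds\neq 0$, but (a) $\tau_n\to 0$ is not the conclusion sought, and (b) that side condition fails for many non-convex curves as well --- any $\gamma_0$ with a $180^\circ$ rotational symmetry, for instance --- so the ``residual cases'' are not just the convex ones.

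The paper's argument works directly with the sequence and sidesteps both the family hypothesis and the inflection-point regularity issues you flag. It uses Koenigs' formula $\gamma_n(t)=\tau_n^{-1}\int_0^t B_n\times B_n'\,dt$ for a constant-torsion curve, shows (since $|B_n'|=c_n|\tau_n|\to 0$) that the binormal indicatrix $B_n$ concentrates at the north pole, and then reads off that $\gamma'$ is, up to a fixed rotation, the uniform limit of $\tau_n^{-1}\beta_n'$, where $\beta_n$ is the planar projection of $B_n$. Closedness of $\gamma_n$ forces each $\beta_n$ to enclose zero signed area; this passes to the rescaled limit, so $\gamma$ itself encloses zero signed area and hence cannot be simple. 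Your key insight --- that the enclosed area of $\gamma_0$ is the obstruction --- is exactly the one the paper exploits, but the paper obtains it from an \emph{exact} identity on each $\gamma_n$ rather than from a linearization at $\gamma_0$, and that is precisely what upgrades the conclusion from infinitesimal to genuine rigidity.
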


Our interest in this work arose from the study of time flat surfaces \cite{bray_jauregui}, which are spacelike codimension-2 submanifolds of a spacetime satisfying a special geometric condition.  For one-dimensional submanifolds, the condition is simply that of constant torsion.

For the reader's convenience, we recall the Frenet formulas for a curve $\gamma(s)$ in $\R^3$ para\-metrized by arc length, with curvature $\kappa := |\gamma''(s)|$ nonzero.  The tangent 
$T$, normal $N$, and binormal $B$ satisfy:
\begin{align*}
 T' &= \kappa N\\
 N' &= -\kappa T + \tau B\\
 B' &= -\tau N,
\end{align*}
which serves as a definition of the torsion, $\tau$.  Note that conventions for the sign of $\tau$ vary; ours is the opposite of do Carmo \cite{docarmo}.

While we provide new proofs of Theorems \ref{thm:R3} and \ref{thm:R21} in this paper, these two theorems are also implied by previously known four-vertex theorems.  The curves $\gamma$ above are convex in the sense that each point has a tangent plane of which $\gamma$ lies entirely on one side.  The four-vertex theorem of Sedykh \cite{sedykh} for closed convex curves $\gamma$ in $\R^3$ asserts that the torsion of $\gamma$ has at least four zeroes.  
A refinement of this result due to Thorbergsson and Umehara \cite{TU} shows that the torsion must change sign at least four times (if $\gamma$ is not a plane curve), which implies Theorem \ref{thm:R3} (see also \cite{ghomi}).  Theorem \ref{thm:R21} follows as well because the sign of the torsion is determined by the sign of the expression, $(\gamma' \times \gamma'') \cdot \gamma''',$ which 
is independent of whether $\times$ and $\cdot$ are taken with respect to the $\R^3$ or $\R^{2,1}$ metric.

\begin{ack}
The authors would like to thank Daniel Stern and Mark Stern for helpful conversations and are very grateful to Mohammad Ghomi for pointing out the above references and providing insightful feedback.
\end{ack}

\section{Curves in $\R^3$ with nonnegative torsion}
\label{sec:R3}
As depicted in figure \ref{fig:graph}, let $\gamma$ be a smooth curve in $\R^3$ which is the graph, with height function $h$, over a 
simple closed curve $\gamma_0$ in $\R^2$ with curvature $\kappa_0 > 0$.  We begin our discussion by characterizing the geometry of $\gamma_0$.   

\begin{figure}[ht]
\begin{center}
\includegraphics[height=1.9in]{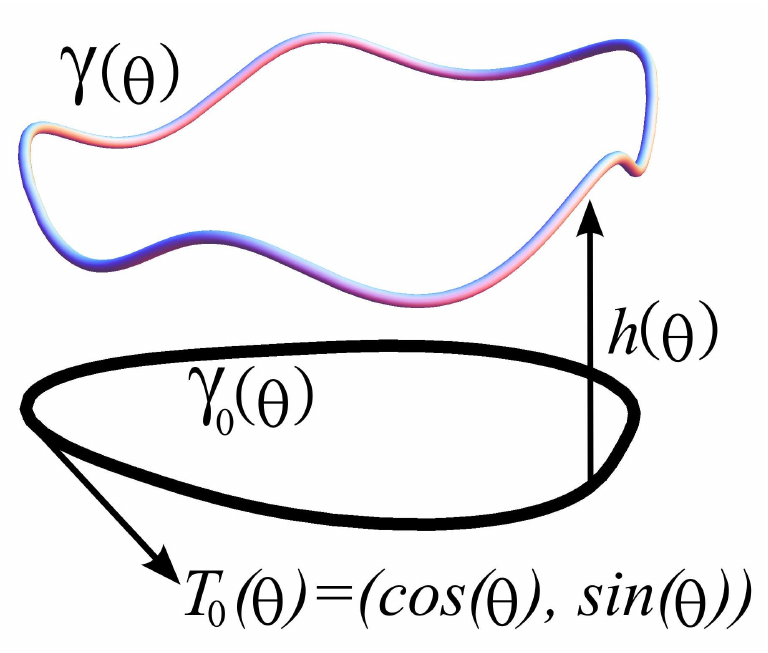}
\caption{\small We restrict to curves $\gamma$ in $\R^3$ that are graphs over simple closed curves $\gamma_0$ in $\R^2$ with positive curvature.  Since $\gamma_0$ has curvature $\kappa_0 > 0$, we may parametrize it by the direction $\theta$ of its unit tangent vector $T_0 = (\cos\theta, \sin\theta)$ in $\R^2$.  Since $\gamma$ is a graph over $\gamma_0$ with height function $h$, we parametrize $\gamma$ and $h$ by $\theta$ as well.\label{fig:graph}}
\end{center}
\end{figure}

Let the unit tangent vector of the base curve $\gamma_0$ in $\R^2$ be 
\begin{equation*}
   T_0 = \gamma_0'(s) = (\cos\theta(s), \sin\theta(s)),
\end{equation*}
where $s$ is the arc length of $\gamma_0$.  A standard exercise is that the (signed) curvature $\kappa_0$ of $\gamma_0$ is given by 
$\kappa_0 = \frac{d\theta}{ds}$ (upon reversing the orientation of $\gamma_0$ if necessary).  Since $\kappa_0 > 0$ and $\gamma_0$ is simple by hypothesis, 
$\theta$ is strictly increasing on $\gamma_0$ and may be taken to go from $0$ to $2\pi$ when 
going around $\gamma_0$ once.  Whereas it is more common to parametrize curves by their arc length parameter $s$, it is essential to our argument to parametrize
$\gamma_0$ (and hence $\gamma$ and $h$ later on) as periodic functions of $\theta$ with period $2\pi$. 

Note
\begin{equation*}
\frac{d \gamma_0}{d\theta} = \frac{1}{\kappa_0}\frac{d\gamma_0}{ds} = \frac{1}{\kappa_0}(\cos\theta, \sin\theta),
\end{equation*}
and, since $\gamma_0$ is closed, $0 = \int_0^{2\pi} \frac{d \gamma_0}{d\theta} d\theta$.  Hence,  
\begin{eqnarray}
   0 &=& \int_0^{2\pi} \frac{1}{\kappa_0(\theta)} \cos\theta \; d\theta \label{eqn:cond1}\\
   0 &=& \int_0^{2\pi} \frac{1}{\kappa_0(\theta)} \sin\theta \; d\theta. \label{eqn:cond2}
\end{eqnarray}
Thus, the geometry of $\gamma_0$ is captured by $\kappa_0(\theta) > 0$, periodic in $\theta$ with period $2\pi$, subject to equations (\ref{eqn:cond1}) and (\ref{eqn:cond2}) above.

\subsection{Proof of Theorem \ref{thm:R3}}

The next step to proving Theorem \ref{thm:R3} is to compute the formula for the torsion of the curve $\gamma$ in $\R^3$ in terms of the height function $h(\theta)$ and the curvature $\kappa_0(\theta)$ of the base curve $\gamma_0$.  For the remainder of the proof, all derivatives are with respect to $\theta$:
\begin{eqnarray*}
\gamma &=& (\gamma_0; h) \\
\gamma' &=& (\frac{1}{\kappa_0} \cos\theta, \frac{1}{\kappa_0} \sin\theta, h') \\
\kappa_0 \gamma' &=& (\cos\theta, \sin\theta, \kappa_0 h') \\
(\kappa_0 \gamma')' &=& (-\sin\theta, \cos\theta, (\kappa_0 h')') \\
(\kappa_0 \gamma')'' &=& (-\cos\theta, -\sin\theta, (\kappa_0 h')'') 
\end{eqnarray*}
so that 
\begin{equation}\label{eqn:cross}
   \kappa_0 \gamma' \times (\kappa_0 \gamma')' = ((\kappa_0 h')' \sin\theta - \kappa_0 h' \cos\theta, - (\kappa_0 h')' \cos\theta - \kappa_0 h' \sin\theta, 1)
\end{equation}
Plugging this into the well known \cite{docarmo} formula (\ref{eqn_tor}) for the torsion $\tau$ of a curve $\gamma$ in $\R^3$ and using standard properties of the cross product gives
\begin{eqnarray}
\tau &=& \frac{(\gamma' \times \gamma'') \cdot \gamma'''}{|\gamma' \times \gamma''|^2} \label{eqn_tor}\\
      &=& \kappa_0 \frac{(\kappa_0 \gamma' \times (\kappa_0 \gamma')') \cdot (\kappa_0 \gamma')''}{|\kappa_0 \gamma' \times (\kappa_0 \gamma')'|^2} \nonumber\\
      &=& \kappa_0 \frac{(\kappa_0 h')'' + \kappa_0 h'}{((\kappa_0 h')')^2 + (\kappa_0 h')^2 + 1} \label{eqn_tor_nice}.
\end{eqnarray}
For the purposes of intuition, consider the simplest case when the base curve $\gamma_0$ is the unit circle in $\R^2$ with constant curvature $\kappa_0 \equiv 1$.  Then 
\begin{equation*}
   \tau = \frac{h''' + h'}{(h'')^2 + (h')^2 + 1}
\end{equation*}
so that 
\begin{equation*}
0 = \int_0^{2\pi} (h'''(\theta) + h'(\theta)) \;d\theta = \int_0^{2\pi} \tau \left[ (h'')^2 + (h')^2 + 1 \right] d\theta
\end{equation*}
from which it is clear that if $\tau \ge 0$ (or $\tau \le 0$), then $\tau \equiv 0$.  Interestingly, this argument generalizes for all 
simple, closed base curves $\gamma_0$ that have $\kappa_0>0$.

\begin{lemma}\label{lem:kernel}
Given a smooth function $\kappa_0(\theta) > 0$ with period $2\pi$ satisfying equations (\ref{eqn:cond1}) and (\ref{eqn:cond2}),  
there exists $f(\theta) > 0$ with period $2\pi$ such that 
\begin{equation}\label{eqn:ode}
   f''(\theta) + f(\theta) = \frac{1}{\kappa_0(\theta)}.
\end{equation}
\end{lemma}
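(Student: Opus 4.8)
The plan is to reduce the Lemma to a linear ODE with periodic boundary conditions, solve it via the Fredholm alternative, and then correct the solution by an element of the kernel of $\frac{d^2}{d\theta^2}+1$ so as to make it positive, exploiting the convex geometry that underlies the whole setup. Write $g:=1/\kappa_0$, a smooth, strictly positive, $2\pi$-periodic function, and set $L:=\frac{d^2}{d\theta^2}+1$ acting on $2\pi$-periodic functions. Since $L$ is self-adjoint on $L^2[0,2\pi]$ with $\ker L=\operatorname{span}\{\cos\theta,\sin\theta\}$, the equation $Lf=g$ has a $2\pi$-periodic solution exactly when $g\perp\cos\theta$ and $g\perp\sin\theta$ in $L^2[0,2\pi]$ — which is precisely hypotheses (\ref{eqn:cond1}) and (\ref{eqn:cond2}). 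Concretely, variation of parameters produces the solution
\begin{equation*}
 f_p(\theta):=\int_0^\theta \sin(\theta-t)\,g(t)\,dt=\sin\theta\int_0^\theta \cos t\,g(t)\,dt-\cos\theta\int_0^\theta \sin t\,g(t)\,dt,
\end{equation*}
and since $\cos t\,g(t)$ and $\sin t\,g(t)$ are $2\pi$-periodic with zero integral over a period — again by (\ref{eqn:cond1}) and (\ref{eqn:cond2}) — their antiderivatives are $2\pi$-periodic, so $f_p$ is smooth and $2\pi$-periodic. Every $2\pi$-periodic solution of $Lf=g$ is then of the form $f=f_p+A\cos\theta+B\sin\theta$ with $(A,B)\in\R^2$, and it remains only to choose $(A,B)$ making $f>0$.

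For that I would pass to geometry. Consider the planar curve
\begin{equation*}
 \Gamma(\theta):=f_p(\theta)\,(\cos\theta,\sin\theta)+f_p'(\theta)\,(-\sin\theta,\cos\theta).
\end{equation*}
A short computation gives $\Gamma'(\theta)=(f_p(\theta)+f_p''(\theta))(-\sin\theta,\cos\theta)=g(\theta)\,(-\sin\theta,\cos\theta)$, so (as $g>0$) $\Gamma$ is a regular closed curve — closed because $\int_0^{2\pi}\Gamma'(\theta)\,d\theta=\big(-\int_0^{2\pi}\frac{\sin\theta}{\kappa_0}\,d\theta,\ \int_0^{2\pi}\frac{\cos\theta}{\kappa_0}\,d\theta\big)=(0,0)$ by (\ref{eqn:cond1}) and (\ref{eqn:cond2}) — whose unit tangent $(-\sin\theta,\cos\theta)$ turns monotonically through exactly $2\pi$ and whose signed curvature equals $1/g=\kappa_0>0$. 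In other words $\Gamma$ is itself a base curve of exactly the type considered in this section, with curvature $\kappa_0(\theta)$; in particular it is a simple closed convex curve bounding an open convex region $U$.

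Now fix a point $p=(p_1,p_2)\in U$. With $(\cos\theta,\sin\theta)$ the outward unit normal of $\Gamma$ at $\Gamma(\theta)$, convexity says that $U$ lies strictly to the inner side of every tangent line of $\Gamma$, i.e.\ $\langle p-\Gamma(\theta),(\cos\theta,\sin\theta)\rangle<0$ for all $\theta$; since $\langle\Gamma(\theta),(\cos\theta,\sin\theta)\rangle=f_p(\theta)$, this reads $p_1\cos\theta+p_2\sin\theta<f_p(\theta)$ for all $\theta$. Hence $f(\theta):=f_p(\theta)-p_1\cos\theta-p_2\sin\theta$ is smooth, $2\pi$-periodic, and strictly positive, and since $p_1\cos\theta+p_2\sin\theta\in\ker L$ it still satisfies $f''+f=g=1/\kappa_0$. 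This is the desired function.

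The ODE step is routine (the Fredholm alternative plus a periodicity check), so the main obstacle is the positivity. The argument above reduces it to the statement that one can place the origin inside the convex region bounded by $\Gamma$, which in turn rests on recognizing $\Gamma$ as a simple closed convex curve — a standard fact for closed plane curves with positive curvature and rotation index one, and essentially the converse of the parametrization set up at the start of this section. A more analytic route to positivity — applying Helly's theorem to the half-planes $\{(A,B):f_p(\theta)+A\cos\theta+B\sin\theta>0\}$ — also works, but it is less transparent and further from the geometric viewpoint of the paper.
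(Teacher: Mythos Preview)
Your argument is correct but takes a genuinely different route from the paper's. The paper writes down an explicit convolution formula
\[
f(\theta)=\int_{-\pi}^{\pi}\frac{\beta\sin\beta}{2\pi}\,p(\beta+\theta+\pi)\,d\beta,\qquad p=\tfrac{1}{\kappa_0},
\]
checks $f''+f=p$ by two integrations by parts (the boundary terms vanish, and the leftover $\cos$-integral dies by (\ref{eqn:cond1})--(\ref{eqn:cond2})), and reads off positivity instantly from the fact that the kernel $\frac{\beta\sin\beta}{2\pi}$ is nonnegative on $(-\pi,\pi)$. Your approach instead produces a particular periodic solution by variation of parameters and then interprets the freedom in the kernel geometrically: the curve $\Gamma$ you build is congruent to $\gamma_0$ (same curvature function, rotation index one), and the positive $f$ you end up with is exactly the support function of that convex curve measured from an interior point. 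What your approach buys is insight---it explains \emph{why} such an $f$ exists, identifying it with a classical object---while the paper's buys economy and self-containment: it never needs to invoke the fact that a closed plane curve of positive curvature and rotation index one is simple and convex, and positivity is a one-line consequence of the explicit kernel rather than a separate geometric step. Both are clean; yours leans on slightly more background, theirs hides the geometry inside a formula.
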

We will prove this lemma momentarily.  Note that in the example above when the base curve $\gamma_0$ is the unit circle in $\R^2$, $f \equiv 1$ satisfies (\ref{eqn:ode}).  This lemma is the key step in the proof of Theorem \ref{thm:R3} since we can then write
\begin{eqnarray}
0 &=& \int_0^{2\pi} \left[f''(\theta) + f(\theta) - \frac{1}{\kappa_0}\right] \kappa_0 h'(\theta) \;d\theta \nonumber\\
&=& \int_0^{2\pi} \left[ (\kappa_0 h')'' + \kappa_0 h' \right] f \;d\theta \nonumber\\
&=& \int_0^{2\pi} \tau \left[ ((\kappa_0 h')')^2 + (\kappa_0 h')^2 + 1 \right] \frac{f}{\kappa_0}  \;d\theta, \label{eqn:avg3}
\end{eqnarray}
having integrated by parts and using (\ref{eqn_tor_nice}).  From this identity,
it is again clear that if $\tau \ge 0$ (or $\tau \le 0$), then $\tau \equiv 0$.  As it is a standard result that a curve with nonzero curvature and zero torsion lies in a plane 
\cite{docarmo}, this proves Theorem \ref{thm:R3}.

\subsection{Proof of Lemma \ref{lem:kernel}}

First note that given $\kappa_0$, a solution $f$ to equation (\ref{eqn:ode}) does not exist unless $\kappa_0$ satisfies the constraints in equations (\ref{eqn:cond1}) and (\ref{eqn:cond2}), as can be seen by integrating by parts twice.  However, given equations (\ref{eqn:cond1}) and (\ref{eqn:cond2}), a {\it positive} solution $f$ to equation (\ref{eqn:ode}) always exists.

To prove existence, we write down the formula for $f$ which comes from identifying the relevant kernel and verify that this $f$ satisfies equation (\ref{eqn:ode}).  For convenience, let $p = 1/\kappa_0$, and recall that $\kappa_0$, $p$, and $f$ are periodic functions with period $2\pi$.  Let 
\begin{equation}\label{eqn:f}
   f(\theta) = \int_{-\pi}^\pi k(\beta) p(\beta + \theta + \pi) \; d\beta
\end{equation}
where
\begin{equation*}
   k(\beta) = \frac{\beta \sin\beta}{2\pi}.
\end{equation*}
Then taking two derivatives in $\theta$ and integrating by parts twice gives us
\begin{equation*}
   f''(\theta) = p(\theta) + \int_{-\pi}^\pi \left(\frac{\cos\beta}{\pi} - \frac{\beta \sin\beta}{2\pi}\right) p(\beta + \theta + \pi) \; d\beta
\end{equation*}
so that 
\begin{eqnarray*}
   f''(\theta) + f(\theta) &=& p(\theta) + \frac{1}{\pi} \int_{-\pi}^\pi p(\beta + \theta + \pi) \cos\beta \; d\beta \\
&=& p(\theta) + \frac{1}{\pi} \int_{\theta}^{\theta + 2\pi} p(\beta) \cos(\beta - \theta - \pi) \; d\beta \\
&=& p(\theta) - \frac{1}{\pi} \int_{\theta}^{\theta + 2\pi} p(\beta) \left(\cos\beta \cos\theta + \sin\beta \sin\theta \right) \; d\beta \\
&=& p(\theta)
\end{eqnarray*}
by equations (\ref{eqn:cond1}) and (\ref{eqn:cond2}).  Since $p = 1/\kappa_0 > 0$ and $k(\beta) > 0$ for $\beta \in (-\pi, \pi) \backslash \{0\}$, it follows from equation (\ref{eqn:f}) that $f > 0$, proving the lemma.

\subsection{Counterexamples to stronger statements}
Theorem \ref{thm:R3} is not true without the assumption of $\kappa_0>0$ for the base curve $\gamma_0$. 
For instance, the closed space curve on the right in figure \ref{fig:coil} has positive torsion and is a graph over a  simple closed plane curve whose
curvature changes sign.  To demonstrate the sharpness of Theorem \ref{thm:R3}, figure \ref{fig:triangle} shows that a simple closed curve with even the slightest amount of negative curvature can admit a graph with positive torsion.  For $t \in [0,2\pi),$ consider the base curve
\begin{equation}
\label{eqn_epi}
\gamma_0(t)= \left((a+b) \cos(t) - c\cos\left(\frac{(a+b)t}{b}\right),(a+b) \sin(t) - c\sin\left(\frac{(a+b)t}{b}\right),0\right),
\end{equation}
with $a=1, b=-1/3,$ and $c=1/6-\epsilon$, which describes a hypotrochoid\footnote{We acknowledge the following website of Mohammad Ghomi, which contains a vast library of plane and space curves: \url{http://people.math.gatech.edu/~ghomi/MathematicaNBs/}}.  For $\epsilon > 0$ small, the curve has $\kappa_0>0$, while for $\epsilon < 0$ small, $\kappa_0$ is slightly negative near the midpoints of  the three sides.  The plane curve in figure \ref{fig:triangle} shows $\gamma_0$ with the choice $\epsilon = -0.05$; the graph on the right uses the height function $h(t) = \frac{1}{4}\sin(3t)$.
By direct computation in Mathematica, $\gamma$ has nonzero curvature and positive torsion (and this behavior persists for all $\epsilon <0$ small).

\begin{figure}
\begin{center}
\includegraphics[height=1.8in]{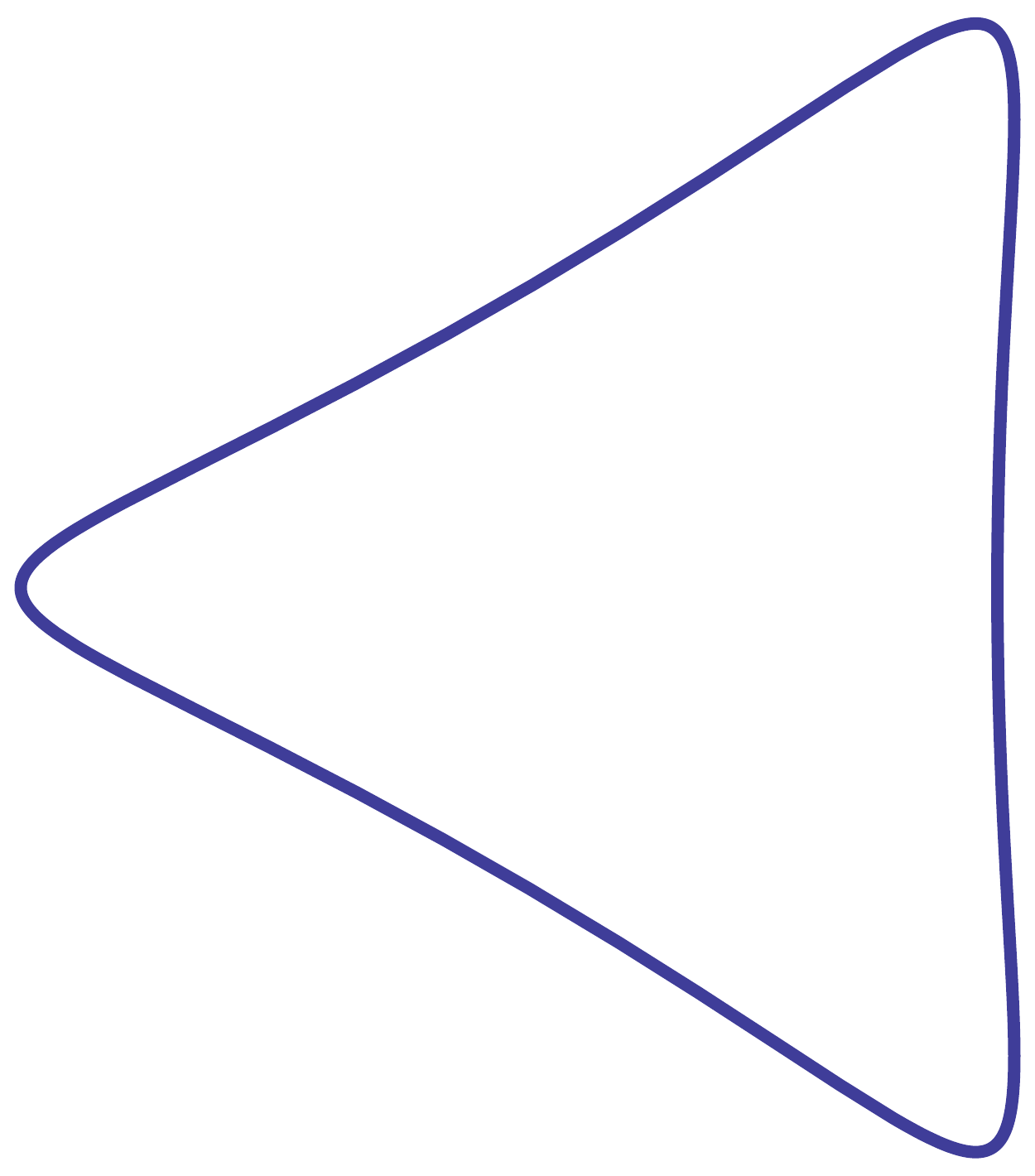}
\hspace{0.5in}
\includegraphics[height=1.25in]{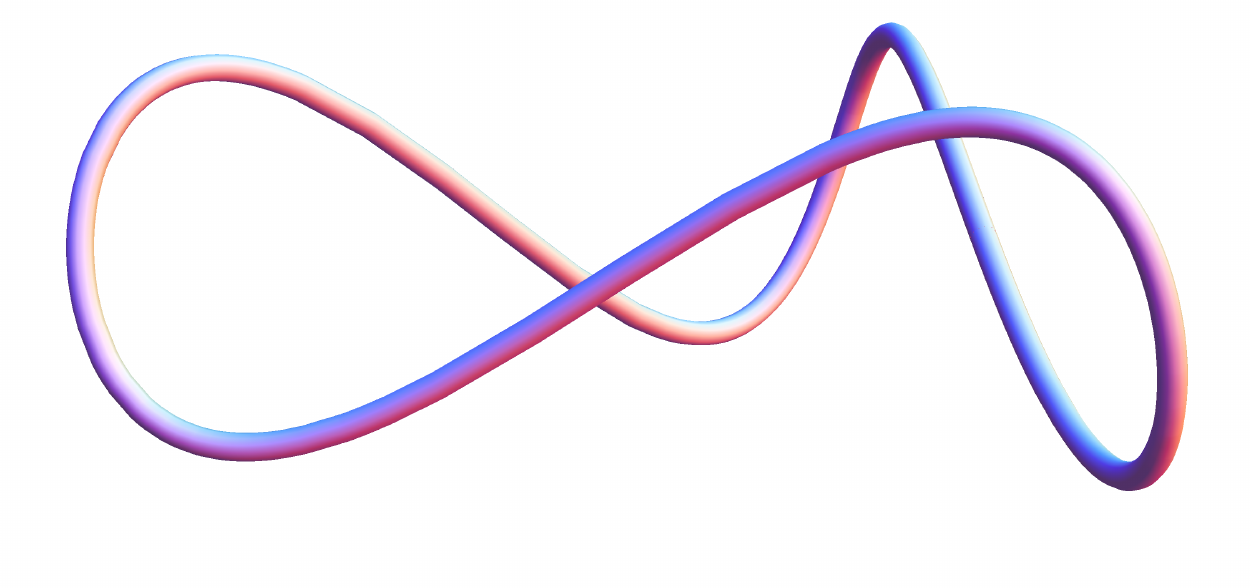}
\caption{\small The rounded triangle on the left, a hypotrochoid, has only a very slight amount of negative curvature, yet the graph over it on the right has positive torsion.  This example demonstrates the sharpness of Theorem \ref{thm:R3}.
\label{fig:triangle}
} \end{center}
\end{figure}

Theorem \ref{thm:R3} is also not true without the assumption that the base curve is simple, that is, does not intersect itself, even if we keep the 
requirement that the curvature of the base curve $\kappa_0 > 0$.  A counterexample can be found using the base curve in (\ref{eqn_epi}), for $t \in [0,2\pi)$, with the values
$a=1, b= 1/5$, and $c=1.3b$.  This curve, an epitrochoid, has positive curvature and is shown on the left in figure \ref{fig:epi} .  The height function $h=\sin(5t)$ yields the curve shown on the right
in figure \ref{fig:epi}.  By direct computation in Mathematica, $\gamma$ has nonzero curvature and positive torsion.

\begin{figure}
\begin{center}
\includegraphics[height=2.1in]{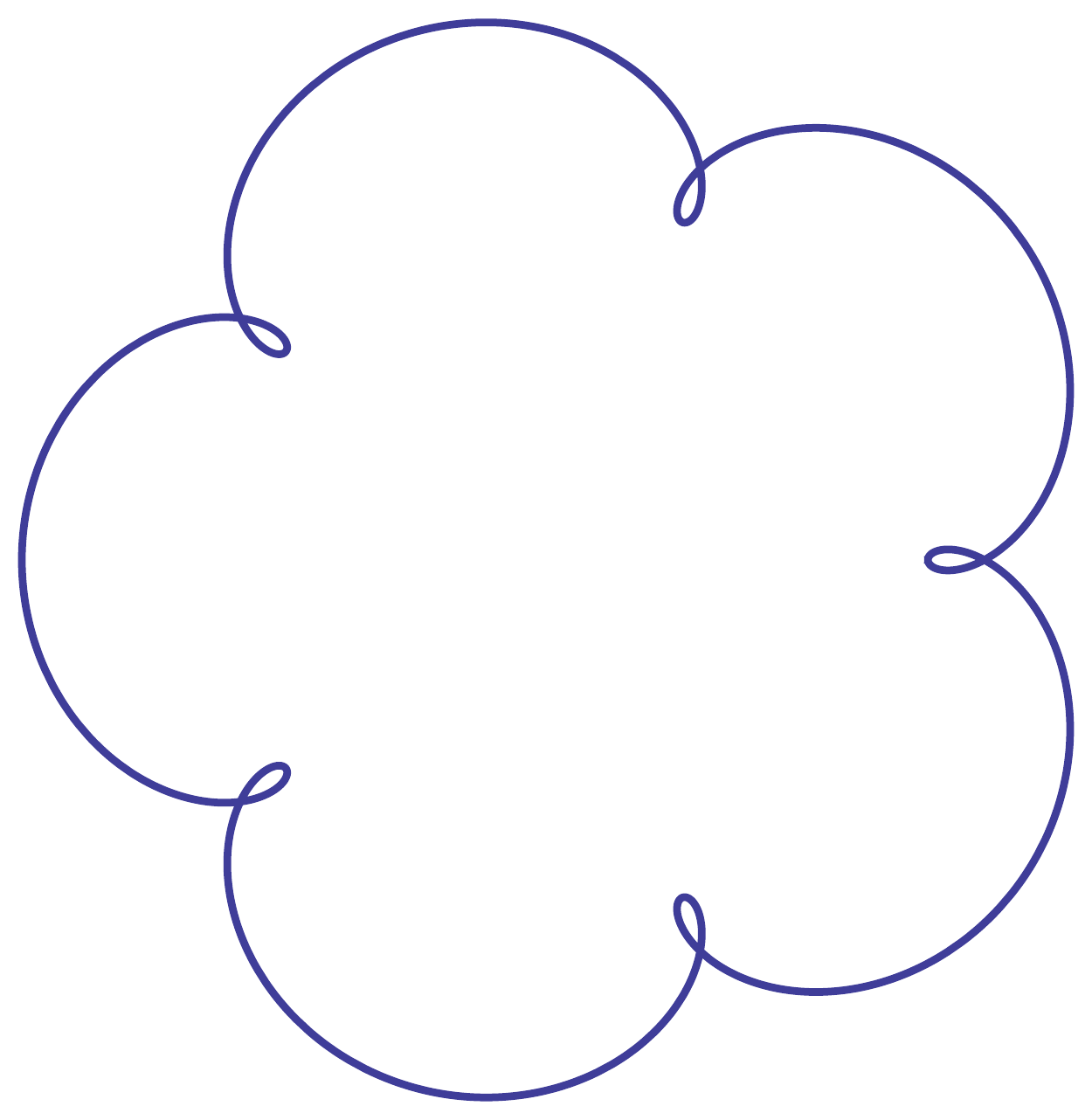}
\includegraphics[height=1.65in]{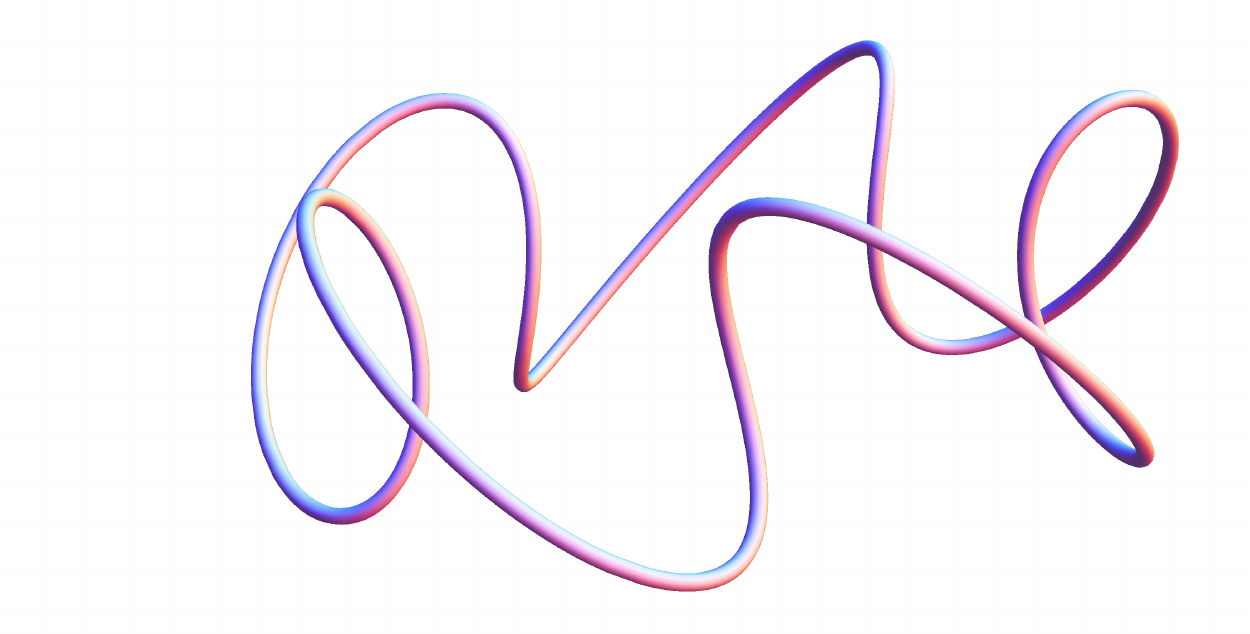}
\caption{\small The closed plane curve $\gamma_0$ on the left, an epitrochoid, is parametrized by (\ref{eqn_epi}) and has positive curvature but is not simple. The space curve $\gamma$ on the right is a graph over $\gamma_0$ with positive torsion.  This example shows that Theorem \ref{thm:R3} is not true without assuming the base curve is simple. 
\label{fig:epi}
} \end{center}
\end{figure}

\section{Curves in $\R^{2,1}$ with nonnegative torsion}
\label{sec:R21}
Since the proof of Theorem \ref{thm:R21} is very similar to the proof of Theorem \ref{thm:R3}, we will only point out the differences.  
A more detailed discussion of the Lorentzian case can be found in \cite{bray_jauregui2}.  

Complete spacelike hypersurfaces in Lorentzian space $\R^{2,1}$ with metric $dx^2 + dy^2 - dt^2$ are well known to be graphs over the $xy$ plane (see \cite{harris} for instance).  Hence, it follows that the smooth simple closed curve $\gamma$ is a graph over a smooth simple closed curve $\gamma_0$ in the $xy$ plane, as in figure \ref{fig:graph}.  

Now consider the osculating plane \cite{docarmo} defined at each point of $\gamma(s)$ by the span of $\gamma'(s)$ and $\gamma''(s)$, where $s$ is the arc length parameter.  The tangent vector $\gamma'(s)$ is spacelike since $\gamma$ lies in a spacelike hypersurface, and $\gamma''(s)$, the curvature vector, is spacelike by assumption (and hence has nonzero length).  Parametrizing by arc length implies that $\gamma''(s)$ is perpendicular to $\gamma'(s)$, so every osculating plane is well-defined and spacelike, and hence not vertical.

\begin{lemma}
\label{lemma_osc}
Consider a curve $\gamma$ in $\R^3$ or $\R^{2,1}$, and suppose the osculating plane is well-defined and is not vertical at some point $\gamma(s_0)$.
Then the projection $\gamma_0$ of $\gamma$ to the $xy$ plane has nonzero curvature at $\gamma_0(s_0)$.
\end{lemma}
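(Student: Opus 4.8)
The plan is to reduce the statement to a purely linear-algebraic comparison: verticality of the osculating plane is equivalent to the vanishing of the planar determinant $\det[\gamma_0',\gamma_0'']$, and non-vanishing of that determinant is exactly what certifies nonzero curvature of $\gamma_0$. Write $\gamma(s)=(\gamma_0(s),h(s))$, where $\gamma_0$ is the projection to the $xy$-plane and $h$ is the remaining coordinate, and let $e_3=(0,0,1)$ be the vertical direction. Everything below uses only the vector-space structure of $\R^3$, so it applies verbatim in $\R^{2,1}$; the metric plays no role, just as the sign of $(\gamma'\times\gamma'')\cdot\gamma'''$ does not depend on which metric is used.

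First I would record the verticality criterion. Since the osculating plane is well-defined at $s_0$, the vectors $\gamma'(s_0)$ and $\gamma''(s_0)$ are linearly independent and span a genuine $2$-plane $\Pi$. By definition $\Pi$ is vertical iff $e_3\in\Pi=\operatorname{span}\{\gamma'(s_0),\gamma''(s_0)\}$, which, the span being $2$-dimensional, is equivalent to $\det[\gamma'(s_0),\gamma''(s_0),e_3]=0$. Expanding this $3\times 3$ determinant along the last column leaves precisely the $2\times 2$ determinant $\det[\gamma_0'(s_0),\gamma_0''(s_0)]$ (up to sign). Hence the hypothesis that $\Pi$ is not vertical is equivalent to $\det[\gamma_0'(s_0),\gamma_0''(s_0)]\neq 0$.

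Next I would observe that this already forces $\gamma_0'(s_0)\neq 0$, since otherwise the determinant would vanish. Therefore $\gamma_0$ is a regular plane curve near $s_0$, and its signed curvature there is given by the standard formula $\kappa_0(s_0)=\det[\gamma_0'(s_0),\gamma_0''(s_0)]\,/\,|\gamma_0'(s_0)|^3$, valid for an arbitrary regular parametrization (not necessarily arc length). As the numerator is nonzero, $\kappa_0(s_0)\neq 0$, which proves the lemma.

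I do not anticipate a real obstacle; the only points needing a little care are (i) remembering that the parameter $s$ need not be arc length for $\gamma_0$, so one should invoke the general curvature formula rather than $|\gamma_0''|$, and (ii) noting explicitly that ``vertical'' and ``osculating plane'' are metric-independent notions, so the single argument covers both $\R^3$ and $\R^{2,1}$. Both are immediate.
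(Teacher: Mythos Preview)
Your proof is correct and takes a genuinely different route from the paper's. The paper argues geometrically: inside the (non-vertical) osculating plane at $\gamma(s_0)$ sits the osculating circle, which agrees with $\gamma$ to second order; the orthogonal projection to the $xy$-plane, being a linear bijection on that plane, sends the circle to an ellipse that agrees with $\gamma_0$ to second order, and ellipses have nowhere-vanishing curvature. Your approach instead identifies non-verticality directly with the scalar condition $\det[\gamma'(s_0),\gamma''(s_0),e_3]\neq 0$, reduces this by cofactor expansion to $\det[\gamma_0'(s_0),\gamma_0''(s_0)]\neq 0$, and recognizes that as (up to the positive factor $|\gamma_0'|^{-3}$) exactly the signed curvature of the projected curve in a general regular parametrization.

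What each approach buys: the paper's osculating-circle picture is conceptually vivid and makes clear why second-order data of $\gamma$ controls second-order data of $\gamma_0$, but it implicitly leans on the existence and properties of osculating circles. Your determinant computation is shorter, avoids the auxiliary circle entirely, and makes the metric-independence manifest (the only metric quantity, $|\gamma_0'|$, lives in the $xy$-plane whose induced metric is Euclidean in both $\R^3$ and $\R^{2,1}$). Your remark that regularity of $\gamma_0$ at $s_0$ is forced automatically by the nonvanishing determinant is a nice bonus that the paper's argument leaves implicit.
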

\begin{proof}
Inside the osculating plane at $\gamma(s_0)$, there exists a unique osculating circle, which agrees with 
$\gamma$ to second order at $\gamma(s_0)$.  Hence, when we project $\gamma$ and its osculating circle to the $xy$ plane, the resulting ellipse 
will agree with the base curve $\gamma_0$ to second order at every point.  In particular, $\gamma_0$ has nonzero curvature at $s_0$.
\end{proof}
Applying the lemma at every point, and appealing to continuity, the projected curve $\gamma_0$ has curvature $\kappa_0 > 0$.  Hence, we may parametrize everything by 
$\theta$, just as before.

The cross product in $\R^{2,1}$ is defined by simply changing the sign of the $t$ component of the usual cross product in $\R^3$.  Hence, equation (\ref{eqn:cross}) becomes
\begin{equation*}
   \kappa_0 \gamma' \times (\kappa_0 \gamma')' = ((\kappa_0 h')' \sin\theta - \kappa_0 h' \cos\theta, - (\kappa_0 h')' \cos\theta - \kappa_0 h' \sin\theta, -1).
\end{equation*}
(Note that equation (\ref{eqn_tor}) also gives the torsion for curves in $\R^{2,1}$, up to sign.)
Also, the other main difference is that now 
\begin{equation*}
\kappa_0^4 |\gamma' \times \gamma''|^2 = |\kappa_0 \gamma' \times (\kappa_0 \gamma')'|^2 = ((\kappa_0 h')')^2 + (\kappa_0 h')^2 - 1,
\end{equation*}
where $|v|^2 = v \cdot v$ is negative for timelike vectors and positive for spacelike vectors. 

Since $\gamma'$ and $\gamma''$ are linearly independent and spacelike, their cross product $\gamma' \times \gamma''$ must be timelike.  Hence, the above equation implies that we still have a sign on the new term 
\begin{equation*}
((\kappa_0 h')')^2 + (\kappa_0 h')^2 - 1 < 0.
\end{equation*}
Thus, the same argument as before using Lemma \ref{lem:kernel} implies that there exists an $f > 0$ such that 
\begin{equation}\label{eqn:avg21}
0 = \int_0^{2\pi} \tau \left[ ((\kappa_0 h')')^2 + (\kappa_0 h')^2 - 1 \right] \frac{f}{\kappa_0}  \;d\theta 
\end{equation}
from which it is again clear that if $\tau \ge 0$ (or $\tau \le 0$), then $\tau \equiv 0$.  Since the tangent vector and the curvature vector to $\gamma$ are both spacelike, the Frenet frame is well-defined, and so a trivial modification of the usual proof that a curve with zero torsion in $\R^{3}$ lies in a plane \cite{docarmo} works in $\R^{2,1}$ as well.  This proves Theorem \ref{thm:R21}.

\section{Curves of higher winding number}
\label{sec:winding}

Theorems \ref{thm:R3} and \ref{thm:R21} also have nice generalizations when we only assume that the curve $\gamma$  is a local graph over the base curve $\gamma_0$.  
In other words, while $\gamma$ must still project down to $\gamma_0$, it is allowed to wrap around the cylinder over $\gamma_0$ more than once, as in figure \ref{fig:winding}.  The precise statement, which is a corollary to results in \cite{RFS}, goes as follows:

\begin{theorem}
\label{thm:winding}
Let $\gamma_0$ be a simple closed curve in $\R^2$ with positive curvature.
Let $\gamma$ be a smooth closed curve in $\R^3$
that is locally a graph over $\gamma_0$. Then if $\gamma$ has nonnegative (or nonpositive) torsion, then $\gamma$ has zero torsion and hence lies in a plane.
\end{theorem}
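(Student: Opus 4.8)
The plan is to run the argument of Section~\ref{sec:R3} with essentially no change, replacing every integral over $[0,2\pi]$ by an integral over $[0,2\pi n]$, where $n\ge 1$ is the number of times $\gamma$ winds around the cylinder over $\gamma_0$. First I would fix the parametrization. Since $\gamma_0$ is simple, closed, and has $\kappa_0>0$, the direction $\theta$ of its unit tangent increases strictly by $2\pi$ around $\gamma_0$, so $\kappa_0(\theta)>0$ is a $2\pi$-periodic function satisfying (\ref{eqn:cond1}) and (\ref{eqn:cond2}), exactly as before. Because $\gamma$ is a \emph{closed} curve that is only \emph{locally} a graph over $\gamma_0$, the derivative $d\theta$ never vanishes along $\gamma$, so the projection $\gamma\to\gamma_0$ is a local diffeomorphism between circles and hence a finite covering of degree $n$ for some integer $n\ge 1$; pulling $\theta$ back along this covering, I can write $\gamma(\theta)=(\gamma_0(\theta);h(\theta))$ with $\theta$ running over one period $2\pi n$, where $\gamma_0$ is extended $2\pi$-periodically and $h$ is $2\pi n$-periodic. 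Note $\kappa_0$, being $2\pi$-periodic, is in particular $2\pi n$-periodic.

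The torsion computation leading to (\ref{eqn_tor_nice}),
\[
\tau = \kappa_0\,\frac{(\kappa_0 h')''+\kappa_0 h'}{((\kappa_0 h')')^2+(\kappa_0 h')^2+1},
\]
is purely pointwise, so it still holds verbatim (derivatives with respect to $\theta$); in particular the denominator is everywhere at least $1$, so $\gamma$ automatically has nonvanishing curvature and its torsion is defined at every point. The one structural observation that makes the winding case work is that Lemma~\ref{lem:kernel} supplies a positive solution $f$ of $f''+f=1/\kappa_0$ that is $2\pi$-periodic, hence \emph{a fortiori} $2\pi n$-periodic. I would then repeat the integration by parts of (\ref{eqn:avg3}) over $[0,2\pi n]$; all boundary terms vanish because $f$, $\kappa_0 h'$, and their derivatives are $2\pi n$-periodic, and $\int_0^{2\pi n}h'\,d\theta=0$. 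This gives
\[
0=\int_0^{2\pi n}\tau\left[((\kappa_0 h')')^2+(\kappa_0 h')^2+1\right]\frac{f}{\kappa_0}\,d\theta,
\]
and since the bracket, $f$, and $\kappa_0$ are all strictly positive, a torsion of one sign must vanish identically, whence $\gamma$ lies in a plane. The Lorentzian analogue (a winding version of Theorem~\ref{thm:R21}) follows identically, with $+1$ replaced by $-1$ in the bracket --- still of fixed sign --- so the conclusion is unchanged.

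I expect the only delicate step to be the parametrization claim itself: one must check that a closed curve $\gamma$ which is locally a graph over $\gamma_0$ projects to $\gamma_0$ by an honest finite covering of the circle, so that a single global angular coordinate $\theta\in\R/2\pi n\Z$ exists and $h$ is a well-defined $2\pi n$-periodic function of it. Once that is in place there is essentially nothing further to prove: the positivity and $2\pi$-periodicity of the kernel $f$ from Lemma~\ref{lem:kernel} are precisely what render the proof of Theorem~\ref{thm:R3} insensitive to the winding number, so no new analytic ingredient --- and in particular no new version of Lemma~\ref{lem:kernel} --- is required.
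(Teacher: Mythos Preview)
Your proposal is correct and follows essentially the same route as the paper: the authors likewise observe that the proof of Theorem~\ref{thm:R3} carries over verbatim once the integral in (\ref{eqn:avg3}) is taken over $[0,2\pi k]$ (their $k$ is your $n$), with $h$ and $\tau$ now $2\pi k$-periodic while $\kappa_0$ and the kernel $f$ from Lemma~\ref{lem:kernel} remain $2\pi$-periodic. Your added remarks on why the projection is an honest finite cover and on the Lorentzian case are a bit more explicit than the paper, but the argument is the same.
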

The above theorem is also true if $\R^3$ is replaced by $\R^{2,1}$, if we assume $\gamma$ and its curvature vector are spacelike.
Theorem \ref{thm:winding} is depicted in figure 5.  The idea is that positive torsion causes the curve $\gamma$ to ``screw upwards'' in the long run like a helix, thereby making it impossible for it to close up on itself.

Our new proof of Theorem \ref{thm:winding} is nearly exactly the same as our proofs of Theorems \ref{thm:R3} and \ref{thm:R21}.  Our proof is made clear by a modification of equation (\ref{eqn:avg3}), which now becomes:
\begin{equation*}
0 = \int_0^{2\pi k} \tau \left[ ((\kappa_0 h')')^2 + (\kappa_0 h')^2 + 1 \right] \frac{f}{\kappa_0}  \;d\theta.
\end{equation*}
The integer $k$ is the number of times $\gamma$ wraps around the cylinder, where now $h$ and $\tau$ are periodic with period $2\pi k$.  Note that $\kappa_0$ and $f$ still have period $2\pi$ and are defined precisely as before.

An alternate approach is the generalized four-vertex theorem of Romero Fuster and Sedykh \cite{RFS}. Their result shows that on a smooth arc of $\gamma$ joining consecutive self-intersections, at least one zero of the torsion must occur.


\begin{figure}
\begin{center}
\includegraphics[height=1.6in,angle=90]{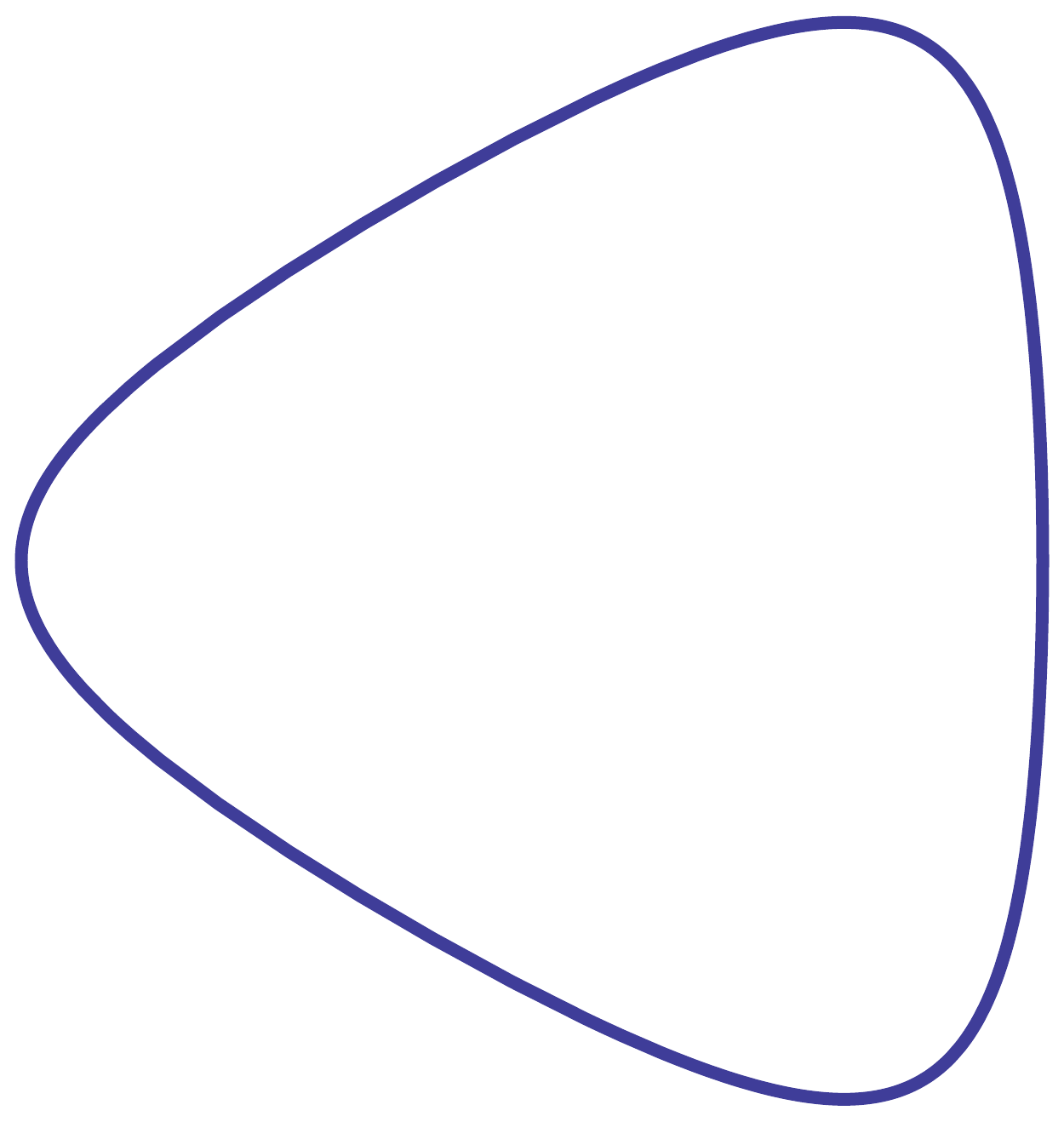}
\includegraphics[height=1.6in]{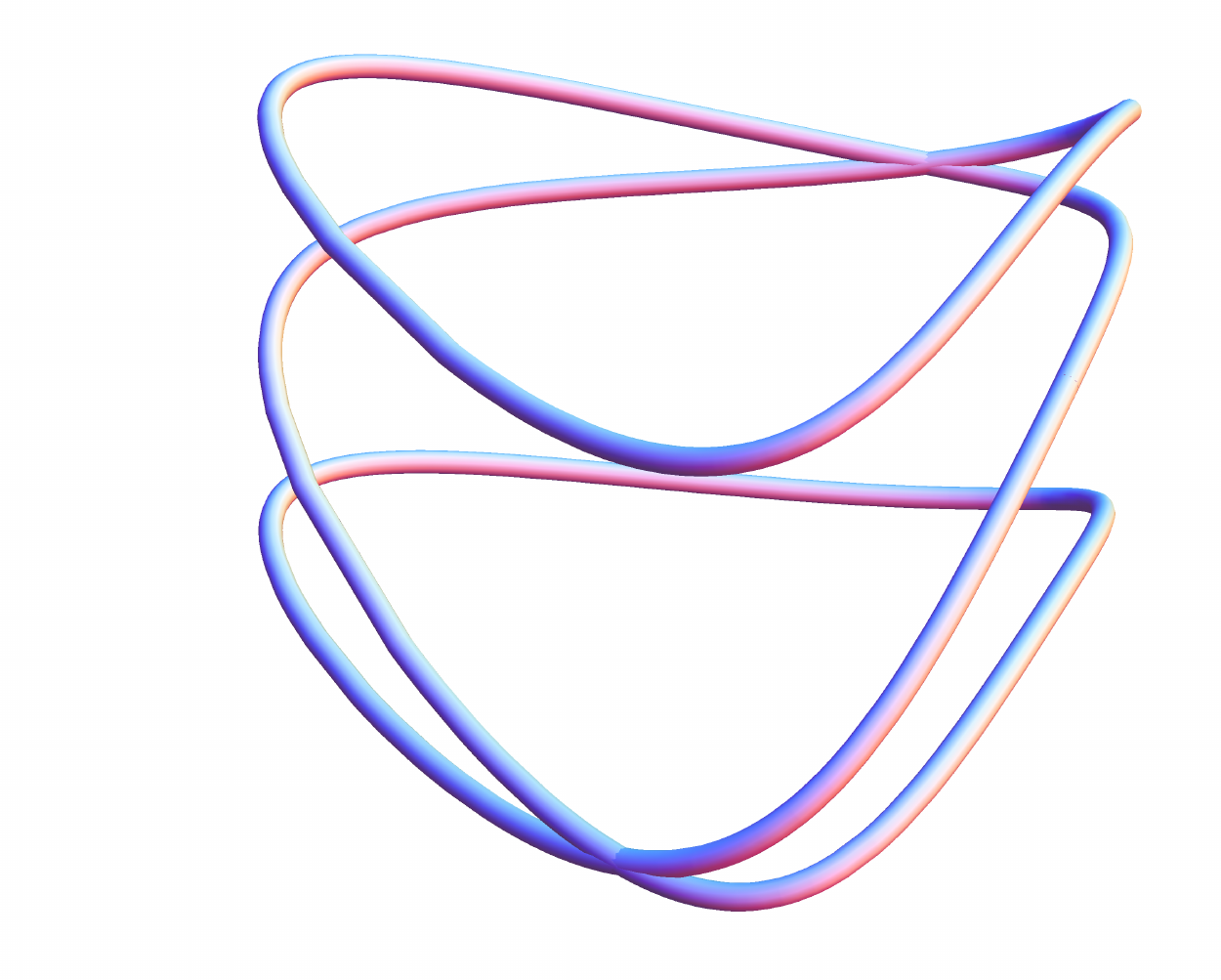}
\includegraphics[height=1.6in]{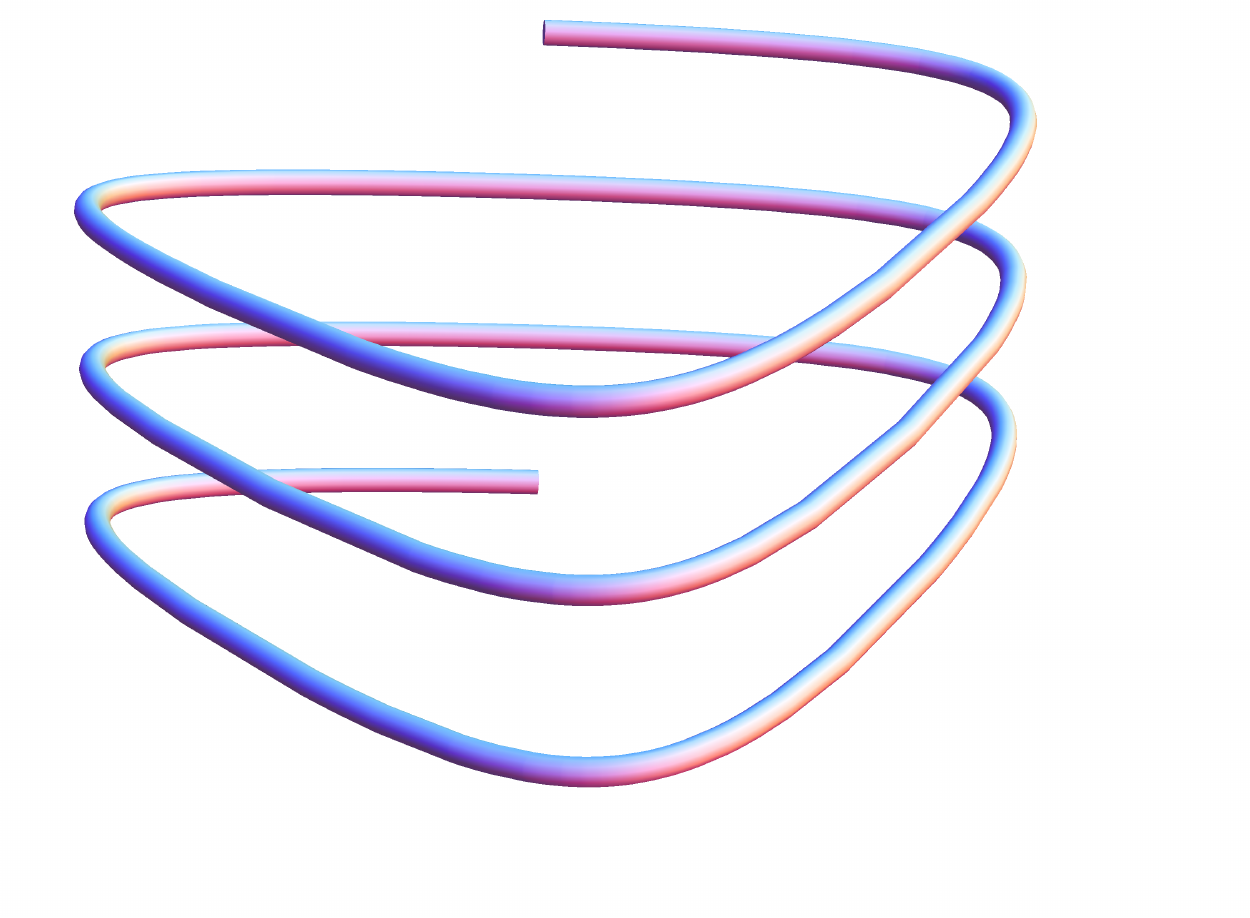}
\caption{\small On the left is a depiction of a plane curve $\gamma_0$ with $\kappa_0>0$.  The space curves in the center and on the right are local graphs over $\gamma_0$, winding around $k=3$ times.  In the center, the torsion of the curve changes signs.  On the right, the torsion is positive, which prevents the curve from being closed (that is, the height function $h$ is not $2\pi k$-periodic).
\label{fig:winding}}
\end{center}
\end{figure}

\section{Rigidity for constant torsion curves}
\label{sec_rigidity}
In this section we pose a rigidity question regarding closed curves of \emph{constant torsion}: is it possible to perturb a plane curve $\gamma_0$ to a space curve with constant nonzero torsion in $\R^3$ (or $\R^{2,1})$?  We consider perturbations in the $C^3$ sense, which is natural because torsion depends on three derivatives of a parametrization.  If $\gamma_0$ has positive curvature, the answer is no, by Theorems \ref{thm:R3} and \ref{thm:R21}, as any small perturbation of such a curve remains a graph over a simple closed plane curve of positive curvature.  In Theorem \ref{thm:rigidity} below, we show that the answer remains ``no'' without any hypothesis on the curvature of $\gamma_0$.

\begin{remark}
It \emph{is} possible to perturb a simple, closed (non-convex) plane curve to have \emph{positive} (but non-constant) torsion.  Consider the coiled helix on the right in figure \ref{fig:coil}.  Its height function may be scaled by a constant $\epsilon>0$, and the torsion of the resulting curve is always positive (and the curvature is positive as well, so the torsion is indeed well-defined).  Yet as $\epsilon \to 0$, the curve converges to its projection to the $xy$ plane, which is simple.
\end{remark}

\begin{remark}
If we drop the hypothesis that $\gamma_0$ is simple (but still require $\kappa_0 > 0$), such constant torsion perturbations exist. Indeed, Weiner's example of a closed curve of constant nonzero torsion belongs to a family of such curves that converges to a self-intersecting plane curve.  

We briefly review this construction (cf. \cite{bates_melko}) here, as it informs the proof of Theorem \ref{thm:rigidity} below. 
For $r \in (0,1]$, define the plane curve
$$\beta_r(t) = r\left(\frac{1}{2}\cos(t) + \frac{\sqrt{2}}{4} \cos(2t), \frac{1}{2}\sin(t) - \frac{\sqrt{2}}{4} \sin(2t),0\right),$$
depicted in figure \ref{fig:beta}.  Let $B_r(t)$ be the vertical lift of $\beta_r(t)$ to the upper hemisphere of the unit sphere.   By an observation of Koenigs  in 1887 \cite{koenigs},
the curve
$$\gamma_r(t) = \frac{1}{r} \int_0^t B_r(t) \times B_r'(t) dt$$
has constant torsion equal to $r$ and binormal equal to $B_r(t)$.  It is possible to verify that $\gamma_r$ is closed with nonvanishing curvature.  
Since the binormal $B_r(t)$ concentrates at the north pole as $r\searrow 0$, it is not hard to see that $\gamma_r$ converges to a plane curve $\gamma_0$ as $r \searrow 0$. It turns out that $\gamma_0$ agrees with $\beta_1$ modulo an isometry of $\R^3$. See figure \ref{fig:nonsimple} for illustrations.
\end{remark}

\begin{figure}
\begin{center}
\includegraphics[height=1.7in]{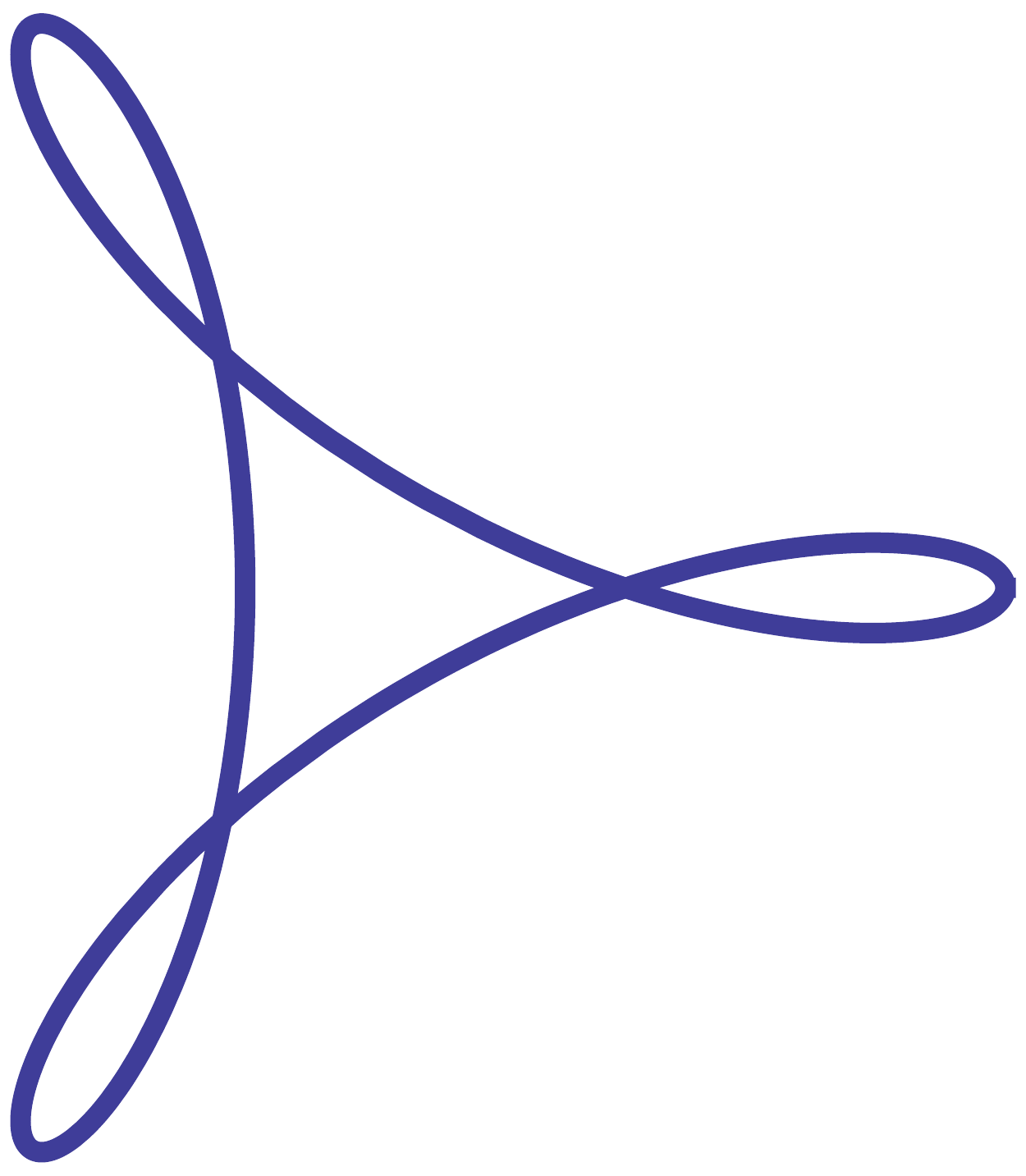}
\caption{\small The plane curve $\beta_1$ used in the construction of a constant torsion curve.  The key features are that $\beta_1$ has a $120^\circ$ rotation symmetry and encloses zero area, counted with multiplicity.  Together, these imply that $\gamma_r$ is closed (see \cites{weiner, bates_melko}).
\label{fig:beta}}
\end{center}
\end{figure}

\begin{figure}
\begin{center}
\includegraphics[height=1.6in]{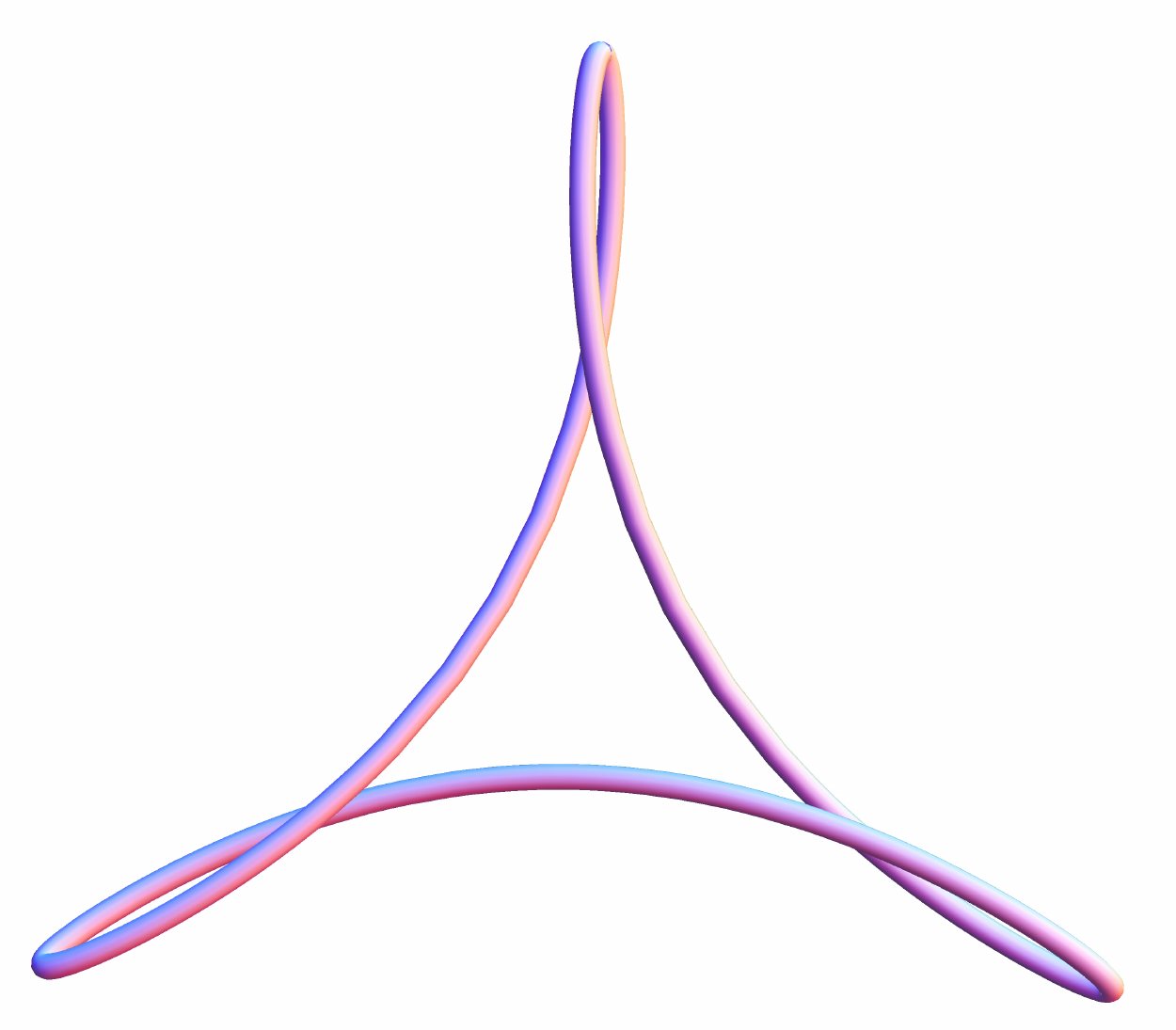}
\includegraphics[width=1.8in]{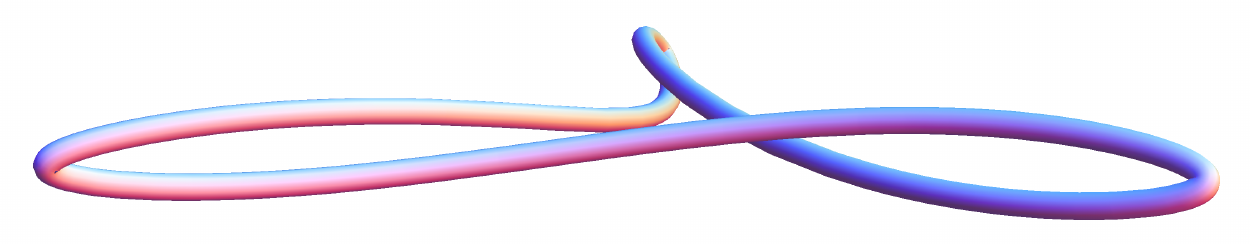}
\includegraphics[height=1.6in]{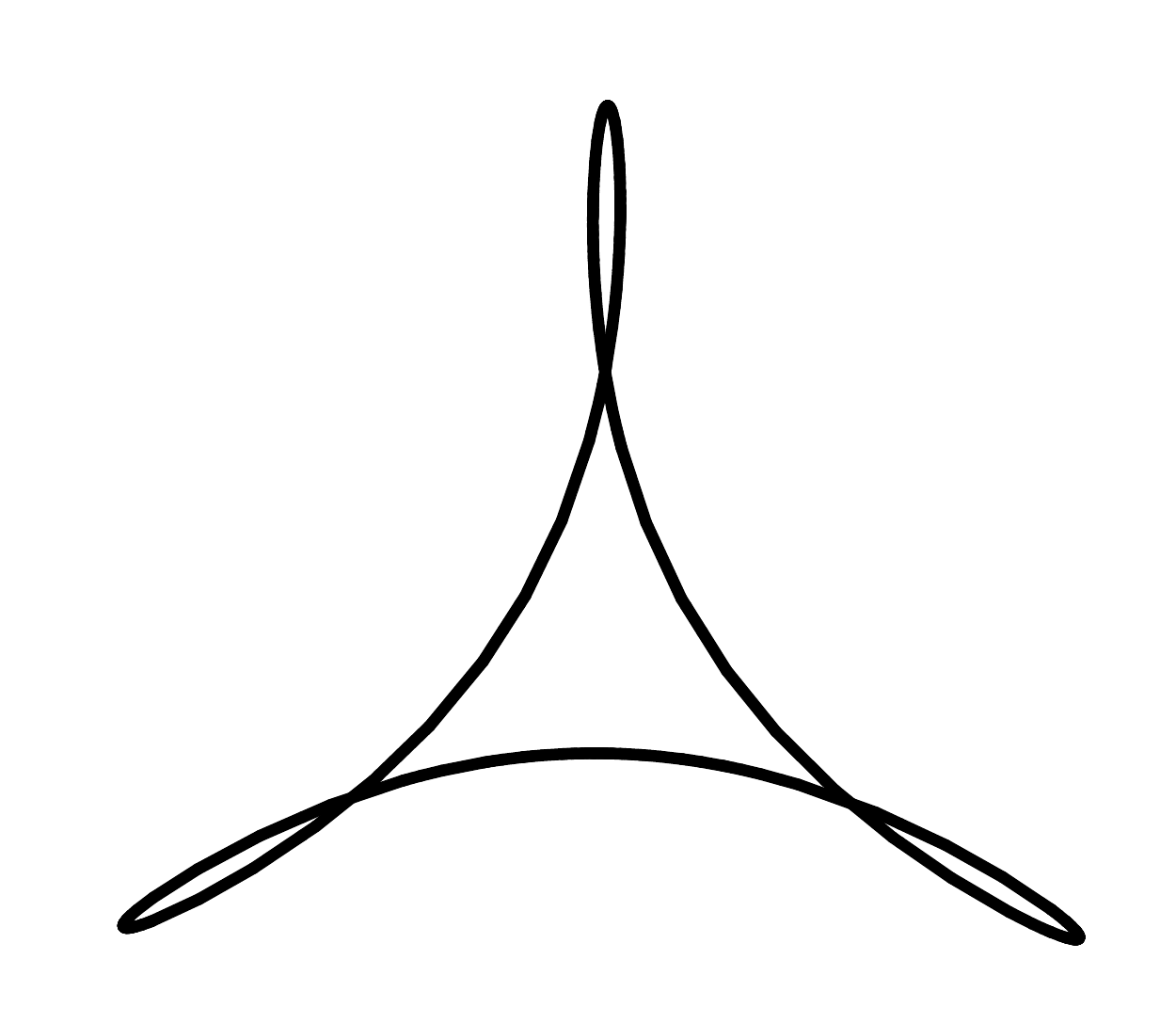}\\
\includegraphics[height=1.6in]{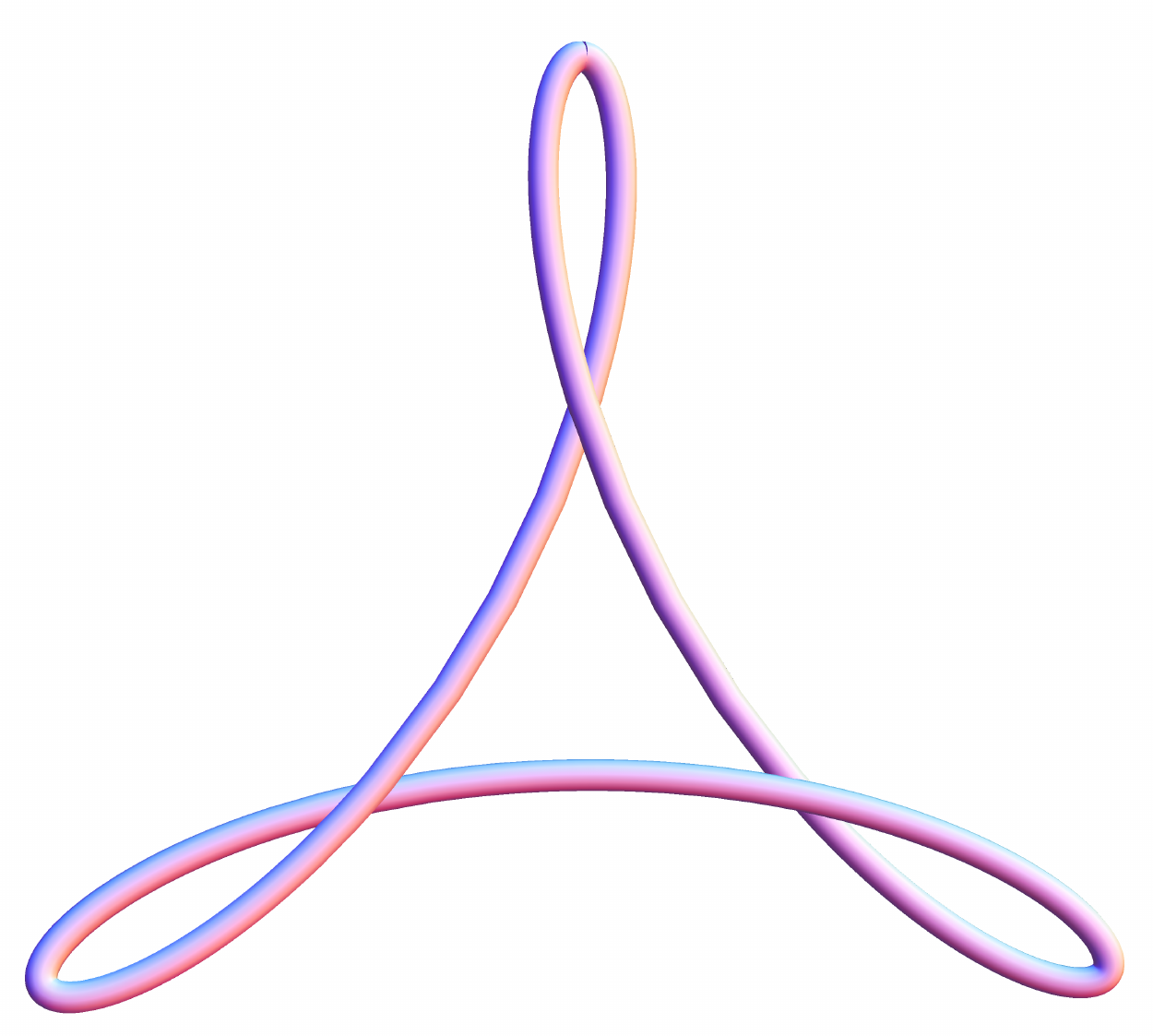}
\includegraphics[width=1.8in]{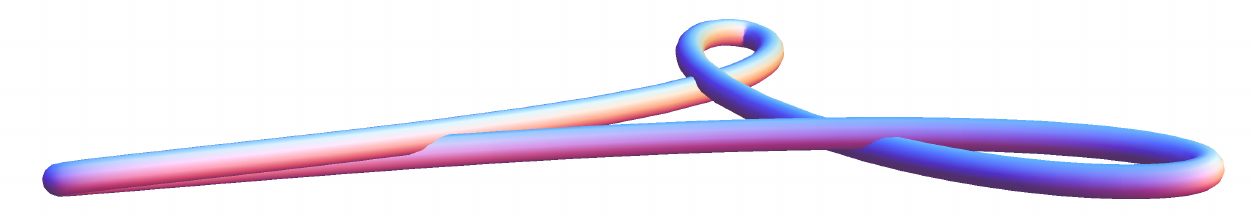}
\includegraphics[height=1.6in]{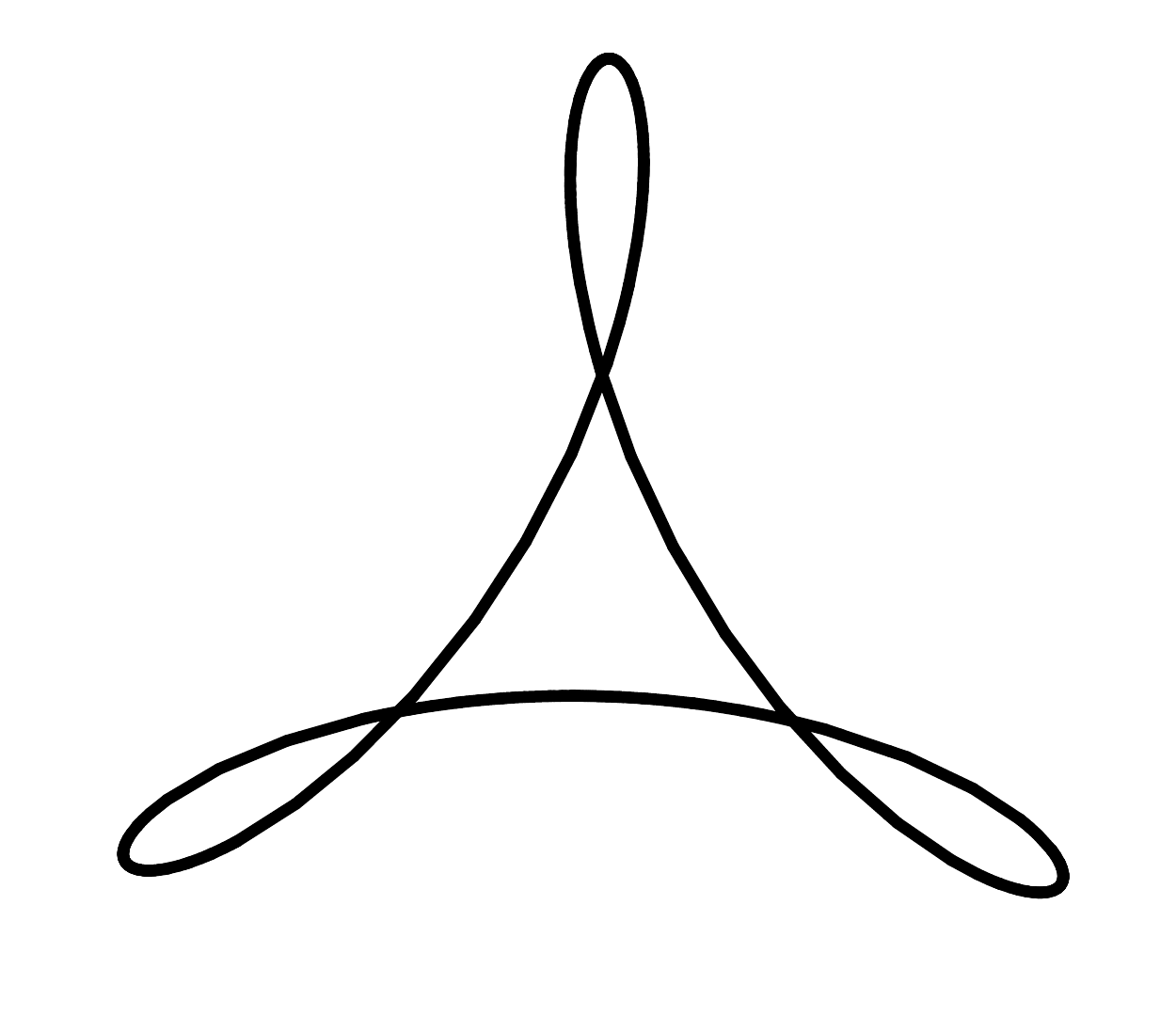}

\caption{\small Depictions of curves of constant torsion constructed by Weiner \cite{weiner} (cf. \cite{bates_melko}).  The top row is $\gamma_1$, shown both from above and from the side, with its projection to the $xy$ plane on the far right.  The bottom row gives the same views for $\gamma_{1/4}$.  As $r \to 0$, $\gamma_r$ converges to the plane curve $\beta_1$, up to a rotation.
\label{fig:nonsimple}}
\end{center}
\end{figure}

Informally, the following theorem states that a simple, closed plane curve cannot be perturbed in the $C^3$ sense to a closed space curve of constant nonzero torsion.

\begin{theorem}
\label{thm:rigidity}
Let $\gamma_n: S^1 \to \R^3$ be a sequence of $C^3$ maps converging in the $C^3$ sense to $\gamma: S^1 \to \R^3$.
 Assume that:
\begin{enumerate}
\item[(i)]  the image of $\gamma$ lies in a plane and is not a point,
\item[(ii)]  $\gamma_n$ has constant speed $c_n > 0$, non-vanishing curvature $\kappa_n$, and constant nonzero torsion $\tau_n \neq 0$.
\end{enumerate}
Then $\gamma$ is not an embedding. In particular, its image has a self-intersection.
\end{theorem}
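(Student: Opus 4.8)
The plan is to argue by contradiction, exploiting the classical correspondence (recalled in the remark above, going back to Koenigs and used by Weiner) between a curve of constant torsion and its binormal indicatrix, together with the principle that such a curve closes up if and only if its binormal indicatrix encloses zero signed area.

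First I would check that the limit is a nondegenerate immersion and that $\tau_n\to 0$. Since $\gamma_n$ has constant speed $c_n$, the limit $\gamma$ has constant speed $c=\lim c_n$, and $c>0$ because otherwise $\gamma'\equiv 0$ and $\gamma$ is a point, contradicting (i). Thus $\gamma$ is a closed immersed plane curve, not a point, so $\kappa(t_0)>0$ for some $t_0$. As $\gamma$ is planar, $(\gamma'\times\gamma'')\cdot\gamma'''\equiv 0$ while $|\gamma'\times\gamma''|(t_0)>0$, so evaluating the torsion formula $\tau_n=(\gamma_n'\times\gamma_n'')\cdot\gamma_n'''\,/\,|\gamma_n'\times\gamma_n''|^2$ at $t_0$ and passing to the limit ($C^3$ convergence, $\kappa_n(t_0)\to\kappa(t_0)>0$) gives $\tau_n\to 0$. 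Next, parametrizing by arc length and writing $L_n\to L$ for the lengths, the binormal $B_n\colon S^1\to S^2$ is defined (curvature is nonvanishing) and satisfies $|dB_n/ds|=|\tau_n|$; hence the spherical curve $B_n$ has length $|\tau_n|L_n\to 0$ and therefore lies in a spherical cap of radius $\varepsilon_n\to 0$ about some point $B_\infty$. Since $B_n(t_0)$ also converges to a unit normal of the plane of $\gamma$, after a rotation we may take $B_\infty=(0,0,1)$ and $\gamma$ horizontal.

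I would then show the signed curvature $\kappa_s$ of $\gamma$ has a fixed sign: if not, then near points where $\kappa_s>0$ the $B_n$ accumulate at $(0,0,1)$ and near points where $\kappa_s<0$ they accumulate at $(0,0,-1)$, so the connected closed curve $B_n$ would have length exceeding $2\pi-o(1)$, contradicting the previous step. So, reversing orientation if necessary, $\kappa_s\ge 0$; in particular $\gamma$ is weakly convex. Now suppose toward a contradiction that $\gamma$ is an embedding. Then $\gamma$ is a simple closed plane curve with $\kappa_s\ge 0$, so the Umlaufsatz gives rotation number $+1$ and total curvature $\oint\kappa\,ds=2\pi$; since $\kappa_n\to\kappa$ uniformly, $\oint_{\gamma_n}\kappa_n\,ds\to 2\pi$. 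Meanwhile $B_n$ has geodesic curvature $\kappa_g=\kappa_n/|\tau_n|>0$ and total geodesic curvature $\oint_{B_n}\kappa_g=\oint_{\gamma_n}\kappa_n\,ds$, and — by the Koenigs/Weiner identity $\gamma_n'=\tau_n^{-1}(B_n\times B_n')$ — closedness of $\gamma_n$ is equivalent to the vanishing of the spherical vector area $\tfrac12\oint_{B_n}B\times dB$. Feeding these into the Gauss--Bonnet formula (with multiplicity) for $B_n$ on $S^2$, together with the fact that $B_n$ lives in a cap of vanishing radius, one should conclude that the rotation number of $B_n$ — a positive integer — is forced to equal $1$ for $n$ large. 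But a closed spherical curve of rotation number $1$ with strictly positive geodesic curvature is a simple convex curve; it bounds a genuine region $\Omega_n$ of positive area, and then $\tfrac12\oint_{B_n}B\times dB=\iint_{\Omega_n}x\,dA$ differs from $(0,0,1)\,\mathrm{Area}(\Omega_n)\neq 0$ by at most $\varepsilon_n\,\mathrm{Area}(\Omega_n)$ — contradicting its vanishing. Hence $\gamma$ cannot be an embedding.

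The heart of the matter — and the step I expect to require the most care — is forcing the binormal indicatrix to be \emph{simple}. Positive geodesic curvature alone is not enough: Weiner's closed constant-torsion curves have binormal indicatrices that wind around twice and bound zero net area, which is exactly the mechanism allowing them to close up and converge to a self-intersecting plane curve. Embeddedness of $\gamma$ must be used precisely to pin the rotation number of $B_n$ down to $1$ — through the value $2\pi$ of the total curvature — after which the closing-up constraint becomes incompatible with $B_n$ being an honest loop. (When $\gamma$ happens to be strictly convex this last step can be shortcut: $\gamma_n$ is then a graph over a simple closed plane curve of positive curvature, so Theorem~\ref{thm:R3} already rules out constant nonzero torsion; the spherical argument is what allows one to also treat flat arcs in $\gamma$.)
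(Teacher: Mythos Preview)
Your setup matches the paper's: you show $\tau_n\to 0$, that the binormal indicatrix $B_n$ concentrates at a pole, and you invoke Koenigs' identity $\gamma_n'=\tau_n^{-1}(B_n\times B_n')$ together with the observation that closedness of $\gamma_n$ forces $\oint B_n\times dB_n=0$. From that point on the two arguments diverge.

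The paper never tries to prove $B_n$ is simple. Instead it projects $B_n$ to the $xy$-plane to obtain a closed plane curve $\beta_n$, reads off from the $z$-component of Koenigs' identity that each $\beta_n$ encloses zero signed area, and then shows by a two-line computation that $\gamma'=\lim \tau_n^{-1}A\beta_n'$ for a fixed rotation $A$; hence $\tau_n^{-1}\beta_n\to A^{-1}\gamma$ uniformly. Zero signed area passes to the uniform limit, so $\gamma$ itself encloses zero signed area and therefore cannot be an embedding. No turning numbers, no Gauss--Bonnet, no convexity of $B_n$ --- and as a bonus one gets the stronger conclusion that the signed area of $\gamma$ vanishes.

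Your route is more geometric and heavier: you first prove $\kappa_s\ge 0$ on $\gamma$ (a pleasant observation the paper does not need), then use embeddedness to pin $\oint\kappa_n\,ds\to 2\pi$, and finally try to convert this into ``turning number of $B_n$ equals $1$'' via Gauss--Bonnet on the cap. That step can be made rigorous --- choose a smooth frame on the cap so that $2\pi m=\oint\kappa_g\,d\sigma-\oint\omega_{12}$, and note $|\oint\omega_{12}|\le \sup|\omega_{12}|\cdot\operatorname{length}(B_n)\to 0$ --- but it and the subsequent ``locally convex $+$ rotation number $1$ $\Rightarrow$ simple on a curved cap'' both require more care than your sketch supplies. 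The paper's approach buys you a short, elementary proof that sidesteps exactly the step you identified as the heart of the matter.
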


Theorem \ref{thm:perturb} follows, because a simple closed curve is not embedded.

\begin{proof}[Proof of Theorem \ref{thm:rigidity}]
Let $t$ denote the $S^1$ parameter, and let a prime denote $\frac{d}{dt}$.  By the Frenet formulas, the normal $N_n$ and binormal $B_n$ of $\gamma_n$ satisfy 
\begin{equation}
\label{eqn_B_n_prime}
\frac{1}{c_n}B_n' = -\tau_n N_n.
\end{equation}
Then $B_n \times B_n' = \tau_n \gamma_n'$, since the unit tangent $T_n$ equals $\frac{1}{c_n}\gamma_n'$.  Assuming without loss of generality that $\gamma_n(0)=\vec 0$ for all $n$, we arrive at Koenigs' formula:
$$\gamma_n(t) = \frac{1}{ \tau_n} \int_0^t B_n(t) \times B_n'(t) dt.$$

Without loss of generality, assume that the image of $\gamma$ lies in the $z=0$ plane in $\R^3$.  
\begin{lemma}
\label{lemma_binormal}
The binormal indicatrix $B_n: S^1 \to S^2$ converges
uniformly, as $n \to \infty$, to the constant map with value $(0,0,1)$ (possibly reversing orientation, if necessary). 
\end{lemma}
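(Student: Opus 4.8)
The plan is to show that the binormals $B_n$ cannot escape a small neighborhood of the vertical axis, and then pin down which pole they concentrate at. First I would use hypothesis (i) together with $C^3$ convergence: since the image of $\gamma$ lies in the $z=0$ plane and $\gamma$ is an immersion (its curvature is the limit of the $\kappa_n$, which we must check stays away from $0$ — more on this below), the tangent $T=\frac{1}{c}\gamma'$ and the curvature vector $\gamma''$ both lie in the $z=0$ plane wherever $\gamma''\neq 0$, so the limiting osculating planes are all horizontal and the limiting binormal is $\pm(0,0,1)$ at every such point. The issue is that $\gamma$ could have isolated points of vanishing curvature, where the osculating plane degenerates; I would handle this either by noting that $\kappa_n \to \kappa$ in $C^1$ forces $\kappa$ to be nonnegative with the set $\{\kappa = 0\}$ having empty interior (since $\gamma$ is a nondegenerate plane curve by (i)), or more robustly by working directly with Koenigs' formula as below, which only needs the binormal to be defined, not a uniform lower bound on curvature.

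Next I would exploit Koenigs' formula $\gamma_n(t) = \frac{1}{\tau_n}\int_0^t B_n\times B_n'\,dt$ in a soft way. Differentiating, $\frac{1}{c_n}\gamma_n' = T_n = \frac{1}{c_n\tau_n} B_n\times B_n'$, and since $|T_n|=1$ we get $|B_n\times B_n'| = c_n|\tau_n|$. More importantly, the third component: $T_n\cdot(0,0,1) = \frac{d}{dt}(z\text{-coordinate of }\gamma_n)$, and this converges uniformly to $\frac{d}{dt}(z\text{-coord of }\gamma) \equiv 0$. So the vertical component of the unit tangent $T_n$ tends uniformly to zero, i.e. $T_n$ becomes asymptotically horizontal. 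Since $B_n \perp T_n$ and $B_n$ is a unit vector, this does not immediately force $B_n$ vertical; I also need the vertical component of the normal $N_n$ to tend to zero. For that I would use $\frac{1}{c_n}B_n' = -\tau_n N_n$, so the $z$-component of $N_n$ is $-\frac{1}{c_n\tau_n}(B_n)_z'$; alternatively, since $N_n = \frac{1}{\kappa_n c_n^2}\gamma_n'' - (\text{something})T_n$ and $\gamma_n''$ is $C^1$-converging to $\gamma''$ which is horizontal — but this reintroduces the curvature-lower-bound issue. The cleaner route: $T_n, N_n, B_n$ is an orthonormal frame, and the vertical vector $(0,0,1) = (T_n\cdot e_3)T_n + (N_n\cdot e_3)N_n + (B_n\cdot e_3)B_n$; I know $T_n\cdot e_3 \to 0$ uniformly, so $(N_n\cdot e_3)^2 + (B_n\cdot e_3)^2 \to 1$ uniformly, and I need to rule out $N_n\cdot e_3$ staying large.

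To rule that out, I would integrate. We have $\gamma_n$ closed, so $\int_0^{L_n} N_n\,ds = 0$ is not automatic, but $\int B_n'\,ds = 0$ gives $\int \tau_n N_n\,ds = 0$, i.e. $\int_0^{L_n} N_n\,ds = 0$ since $\tau_n$ is constant. Dotting with $e_3$: $\int_0^{L_n}(N_n\cdot e_3)\,ds = 0$. Combined with $(N_n\cdot e_3)^2 \to 1 - (B_n\cdot e_3)^2$ and the fact that $B_n\cdot e_3$ is continuous, a connectedness/intermediate-value argument forces $N_n\cdot e_3$ to change sign, hence to pass through small values, hence (by uniformity) $B_n\cdot e_3$ is uniformly close to $\pm1$ somewhere; then continuity of $B_n\cdot e_3$ together with $|B_n\cdot e_3|$ bounded away from being ``small everywhere'' should propagate this to all of $S^1$. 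Honestly, the slickest version avoids $N_n$ entirely: parametrize so that the limiting curve's osculating planes are horizontal, observe from $C^3$-convergence that $T_n \times T_n' = T_n\times(\kappa_n c_n N_n) = \kappa_n c_n\, B_n$ converges to $\kappa c^2\, b$ where $b$ is the (horizontal) binormal-direction of the plane limit, which is vertical; away from zeros of $\kappa$ this gives $B_n \to \pm e_3$ pointwise, and uniform convergence plus continuity and the connectedness of $S^1$ (to fix a single sign) upgrades this — the only gap being a neighborhood of the zeros of $\kappa$, where I would use that $B_n\cdot e_3$ is continuous and already uniformly close to $\pm1$ on the complement, so it cannot dip away on a small interval without violating $\int(N_n\cdot e_3)\,ds=0$ badly.

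The main obstacle I anticipate is precisely the possible degenerate (inflection) points of the limiting plane curve $\gamma$: at such points the osculating plane and hence the binormal of $\gamma_n$ need not converge to $e_3$ a priori, and the naive ``$B_n\to\pm e_3$ pointwise then uniformly'' argument breaks. The fix is to not demand pointwise convergence of $B_n$ from the Frenet data directly, but to extract it from the global integral constraint $\int_0^{L_n} B_n'\,ds = 0$ (equivalently $\int N_n\,ds=0$ by constancy of $\tau_n$) together with the uniform smallness of $T_n\cdot e_3$; this is a soft compactness-type argument that tolerates finitely many bad points, and it is the step I would write out most carefully.

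\begin{proof}[Proof sketch]
By hypothesis $\gamma$ is a nonconstant plane curve, which we take to lie in the plane $z=0$. From $C^3$-convergence, $T_n = \tfrac{1}{c_n}\gamma_n' \to T := \tfrac1c\gamma'$ uniformly, and $T$ is horizontal, so $T_n\cdot e_3 \to 0$ uniformly, where $e_3 = (0,0,1)$. Writing $e_3$ in the orthonormal Frenet frame of $\gamma_n$,
\begin{equation*}
1 = (T_n\cdot e_3)^2 + (N_n\cdot e_3)^2 + (B_n\cdot e_3)^2,
\end{equation*}
so $(N_n\cdot e_3)^2 + (B_n\cdot e_3)^2 \to 1$ uniformly. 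Since $\gamma_n$ is closed, $0 = \int B_n'\,ds = -\tau_n\int N_n\,ds$ and $\tau_n\neq 0$, whence $\int_0^{L_n} (N_n\cdot e_3)\,ds = 0$. If $B_n\cdot e_3$ failed to converge uniformly to a constant in $\{\pm1\}$, then along a subsequence $|B_n\cdot e_3|$ would be bounded away from $1$ on a set of definite length, forcing $|N_n\cdot e_3|$ bounded away from $0$ there; combined with the vanishing of its integral and the continuity of $N_n\cdot e_3$, this contradicts the uniform estimate above once $n$ is large (the function $N_n\cdot e_3$ would have to change sign while $B_n\cdot e_3$ is continuous and $(N_n\cdot e_3)^2+(B_n\cdot e_3)^2$ is within $o(1)$ of $1$). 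Hence $B_n \to \pm e_3$ uniformly, and passing to a subsequence and reversing orientation if necessary we may take the limit to be $(0,0,1)$.
\end{proof}
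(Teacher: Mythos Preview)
Your proof sketch has a genuine gap: the contradiction you claim at the end does not follow from the ingredients you have assembled. You use only that $T_n\cdot e_3 \to 0$ uniformly, that $(N_n\cdot e_3)^2 + (B_n\cdot e_3)^2 \to 1$, and that $\int (N_n\cdot e_3)\,ds = 0$. But these three facts are perfectly consistent with, say, $(N_n\cdot e_3, B_n\cdot e_3)$ tracing out a full loop of the unit circle, e.g.\ behaving like $(\sin\phi_n,\cos\phi_n)$ with $\phi_n$ increasing by $2\pi$ around $S^1$. In that scenario $N_n\cdot e_3$ changes sign, its integral vanishes, and $(N_n\cdot e_3)^2+(B_n\cdot e_3)^2\equiv 1$; yet $B_n\cdot e_3$ is nowhere near a constant $\pm 1$. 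Your sentence ``the function $N_n\cdot e_3$ would have to change sign while \ldots'' identifies a phenomenon that is not, in fact, contradictory. Put differently, from the Frenet equations one has (with $s$ arclength) $(B_n\cdot e_3)' = -\tau_n(N_n\cdot e_3)$ and $(N_n\cdot e_3)' \approx \tau_n(B_n\cdot e_3)$ once $T_n\cdot e_3$ is small, so the pair rotates at angular speed $\tau_n$; unless you know $\tau_n \to 0$, nothing prevents a full revolution.

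The missing idea, and the crux of the paper's argument, is precisely to show $\tau_n \to 0$. The paper does this by picking a single point $p\in S^1$ where the limit curve $\gamma$ has nonzero curvature (such a point exists since $\gamma$ is nonconstant and planar). At $p$, the $C^3$ convergence gives $\gamma_n'(p)\times\gamma_n''(p) \to \gamma'(p)\times\gamma''(p) \neq 0$, so $B_n(p)\to(0,0,\pm 1)$, and simultaneously $\tau_n(p)\to 0$ by the usual torsion formula. Since $\tau_n$ is \emph{constant}, this forces $\tau_n\to 0$ globally. Then the equation you yourself wrote down, $|B_n'| = c_n|\tau_n|$, shows the speed of the binormal indicatrix tends to zero, so $B_n$ is uniformly close to its value at the single point $p$, namely $(0,0,\pm 1)$. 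This two-line argument sidesteps entirely the inflection-point difficulty you were worried about: one good point suffices, and the constancy of $\tau_n$ propagates the conclusion. Your integral identity $\int N_n\cdot e_3\,ds=0$ is correct but not needed.
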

\begin{proof}
By $C^1$ convergence, $\gamma$ has constant speed $c=\lim_{n \to \infty} c_n \geq 0$; by hypothesis (i), $c> 0$. 
Then there exists a point $p \in S^1$ for which the curvature $\kappa$ of $\gamma$ is not zero, so $\gamma'(p) \times \gamma''(p) \neq \vec 0$.   Then
\begin{align*}
\lim_{n \to \infty} B_n(p) &=\lim_{n \to \infty}  \frac{\gamma_n' \times \gamma_n''}{|\gamma_n' \times \gamma_n''|}(p) =  \frac{\gamma' \times \gamma''}{|\gamma' \times \gamma''|}(p)=(0,0,\pm 1)\\
\lim_{n \to \infty} \tau_n(p) &= \lim_{n \to \infty} \frac{\gamma_n''' \cdot(\gamma_n' \times \gamma_n'')}{|\gamma_n' \times \gamma_n''|^2}(p) =\frac{\gamma''' \cdot(\gamma' \times \gamma'')}{|\gamma' \times \gamma''|^2}(p)  =0,
\end{align*}
by the $C^3$ convergence of $\gamma_n$ to $\gamma$ and the fact that $\gamma$ lies in the $z=0$ plane.  By reversing orientation if necessary, we may assume $B_n(p)$ limits to $(0,0,1)$.  Since $\tau_n$ is constant by hypothesis, we have $\lim_{n \to \infty} \tau_n = 0$.
Now, by (\ref{eqn_B_n_prime}) the speed of the curve $B_n$ is $|B_n'| = c_n |\tau_n|$, which converges to zero.  In particular, $B_n$ converges uniformly to $(0,0,1)$.
\end{proof}

Write $(x_n(t), y_n(t), z_n(t))$ for the components of $B_n(t)$, and let $\beta_n(t) = (x_n(t), y_n(t), 0)$ be its projection to the $z=0$ plane.
Note that $\beta_n$ is a closed plane curve that concentrates at the origin as $n \to \infty$.  In what follows, the limits are in the sense of uniform convergence, and Lemma \ref{lemma_binormal} is used on the third line: 
\begin{align*}
\gamma' &= \lim_{n \to \infty} \gamma_n'\\
&= \lim_{n \to \infty} \frac{1}{ \tau_n} B_n \times B_n'\\
&= \lim_{n \to \infty} \frac{1}{ \tau_n} (0,0,1) \times (x_n', y_n', z_n')\\
&= \lim_{n \to \infty} \frac{1}{ \tau_n} (-y_n', x_n',0)\\
&= A\lim_{n \to \infty} \frac{1}{ \tau_n} ( x_n',y_n',0)\\
&= A\lim_{n \to \infty} \frac{1}{ \tau_n} \beta_n',
\end{align*}
where $A$ is the rotation matrix
$$A = \left[\begin{array}{ccc}
0 & -1 & 0\\
1 & 0 & 0\\
0 & 0 & 1
\end{array}\right].$$
The above demonstrates that the rescaled curves $\frac{1}{ \tau_n} \beta_n$, converge uniformly to $\gamma$ as $n \to \infty$, modulo an isometry of $\R^3$.  Using
Lemma \ref{lemma_signed_area} below, we see that each $\beta_n$ encloses zero area, counted with multiplicity, and thus the same goes for $\frac{1}{ \tau_n} \beta_n$ and
thus for $\gamma$ itself.  This implies that $\gamma$ has self-intersections, which completes the proof of Theorem \ref{thm:rigidity}.
\end{proof}

\begin{lemma} For each $n$,
\label{lemma_signed_area}
$$\int_0^{2\pi} \beta_n \times \beta_n' dt = 0.$$
\end{lemma}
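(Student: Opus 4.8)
The plan is to deduce this from the closedness of $\gamma_n$ together with Koenigs' formula, which was already derived in the proof of Theorem \ref{thm:rigidity}. The point is that, although the statement concerns the \emph{planar} curve $\beta_n$, its signed enclosed area is controlled by the spatial curve $B_n$, and hence by $\gamma_n$.

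First I would observe that since $\beta_n = (x_n, y_n, 0)$ takes values in the $z=0$ plane, the cross product $\beta_n \times \beta_n'$ has only a vertical component, namely $\beta_n \times \beta_n' = (0,\,0,\,x_n y_n' - y_n x_n')$; thus the lemma is equivalent to the scalar identity $\int_0^{2\pi}(x_n y_n' - y_n x_n')\,dt = 0$ (which is, up to a factor $\tfrac12$, the area enclosed by $\beta_n$ counted with multiplicity). Next I would note that $x_n y_n' - y_n x_n'$ is precisely the $z$-component of the three-dimensional cross product $B_n \times B_n'$, since the $z$-components $z_n, z_n'$ of $B_n, B_n'$ do not enter the $z$-slot of that cross product. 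So it suffices to show $\int_0^{2\pi} B_n \times B_n'\, dt = \vec 0$ and then read off the third coordinate.

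For that last step I would use $B_n \times B_n' = \tau_n \gamma_n'$, established earlier in the proof of Theorem \ref{thm:rigidity}, and integrate: $\int_0^{2\pi} B_n \times B_n'\, dt = \tau_n \int_0^{2\pi}\gamma_n'\,dt = \tau_n\bigl(\gamma_n(2\pi) - \gamma_n(0)\bigr)$. Since $\gamma_n$ is defined on $S^1$ it is periodic, and we had normalized $\gamma_n(0) = \vec 0$, so this vanishes. Hence $\int_0^{2\pi} B_n \times B_n'\, dt = \vec 0$, and comparing third components yields exactly $\int_0^{2\pi}\beta_n \times \beta_n'\, dt = \vec 0$.

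I do not anticipate a genuine obstacle: the argument is essentially two lines once Koenigs' formula is in hand. The only points needing care are bookkeeping ones — matching the single nonzero component of the planar cross product $\beta_n \times \beta_n'$ with the vertical component of the spatial cross product $B_n \times B_n'$, and noting that we multiply by $\tau_n$ rather than divide, so the hypothesis $\tau_n \neq 0$ is not even used here. It is worth flagging that the conclusion genuinely relies on $\beta_n$ being the projection of the binormal of a \emph{closed} space curve: a typical closed curve on $S^2$, e.g.\ the equator, does not enclose zero signed area, so the closedness of $\gamma_n$ is doing essential work.
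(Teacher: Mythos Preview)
Your proposal is correct and follows essentially the same approach as the paper: both arguments use the closedness of $\gamma_n$ together with the relation $B_n \times B_n' = \tau_n \gamma_n'$ to conclude that the $z$-component of $\int_0^{2\pi} B_n \times B_n'\,dt$ vanishes, and then identify this $z$-component with the only nonzero component of $\int_0^{2\pi} \beta_n \times \beta_n'\,dt$. The only cosmetic difference is that you prove the full vector $\int_0^{2\pi} B_n \times B_n'\,dt$ vanishes and then project, whereas the paper dots with $(0,0,1)$ from the outset; this is the same computation.
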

Recall that for a $C^1$ closed curve $\alpha$ in the $z=0$ plane, $(0,0,1) \cdot \int_{S^1} \alpha \times \alpha' dt$ measures the area bounded by $\alpha$, counted with
(possibly positive and negative) multiplicity.
\begin{proof}[Proof of Lemma \ref{lemma_signed_area}]
Since $\gamma_n$ is a closed curve, we have for each $n$:
\begin{align*}
0 &= (0,0,1) \cdot (\gamma_n(2\pi)-\gamma_n(0))\\
 &= (0,0,1) \cdot \int_0^{2\pi} B_n \times B_n' dt\\
&= \int_0^{2\pi} x_n(t)y_n'(t) - y_n(t)x_n'(t) dt\\
&= (0,0,1) \cdot \int_0^{2\pi} \beta_n \times \beta_n' dt.
\end{align*}
This completes the proof, since $\beta_n \times \beta_n'$ is a scalar multiple of $(0,0,1)$ for each $t$.
\end{proof}

\begin{remark}
Theorem \ref{thm:rigidity} generalizes readily for curves in $\R^{2,1}$, assuming $\gamma$ lies in a spacelike plane.  The same proof works with trivial modifications; we remark that the binormal indicatrices $B_n$ take values in the ``unit sphere'' $\{v \in \R^{2,1} : \; |v|^2=-1\}$.
\end{remark}

\section{Discussion}

Below are some obvious corollaries to Theorems \ref{thm:R3} and \ref{thm:R21} with interesting statements:
\begin{corollary}\label{thm:R3-sign}
Let $\gamma$ be a smooth curve in $\R^3$ which is the graph over a simple closed curve in $\R^2$ with positive curvature.  
If $\gamma$ has either positive or negative torsion at a point, then the torsion must have the other sign at some other point.  
\end{corollary}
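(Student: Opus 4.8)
The plan is to deduce this immediately from Theorem \ref{thm:R3} by contradiction. Suppose $\gamma$ has positive torsion at some point $\theta_0$, but the torsion is nowhere negative; then $\tau \ge 0$ everywhere on $\gamma$. Since $\gamma$ is by hypothesis the graph over a simple closed curve in $\R^2$ with positive curvature, Theorem \ref{thm:R3} applies and forces $\tau \equiv 0$, contradicting $\tau(\theta_0) > 0$. The case in which $\gamma$ has negative torsion at a point is handled symmetrically: if $\tau$ were nowhere positive, then $\tau \le 0$ everywhere, and Theorem \ref{thm:R3} again yields $\tau \equiv 0$, contradicting negativity at that point.

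The only thing to remark is that the three alternatives — $\tau \ge 0$ everywhere, $\tau \le 0$ everywhere, or $\tau$ attains both a strictly positive and a strictly negative value — are exhaustive, so the negation of the conclusion (that the torsion never takes ``the other sign'') places us in one of the first two cases, exactly the hypotheses of Theorem \ref{thm:R3}. There is essentially no obstacle here; the statement is a direct logical repackaging of Theorem \ref{thm:R3}, and the proof is a single paragraph. (The analogous corollary for $\R^{2,1}$ would follow identically from Theorem \ref{thm:R21}.)
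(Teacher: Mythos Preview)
Your proof is correct and matches the paper's approach: the paper presents this as an ``obvious corollary'' to Theorem~\ref{thm:R3} without giving a separate proof, and your contradiction argument is exactly the intended one-line deduction.
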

A similar result is also true in Lorentzian $\R^{2,1}$.
\begin{corollary}\label{thm:R21-sign}
Let $\gamma$ be a smooth simple closed curve in $\R^{2,1}$ with spacelike curvature vector which lies in a complete spacelike hypersurface.  If $\gamma$ has either positive or negative torsion at a point, then the torsion must have the other sign at some other point.
\end{corollary}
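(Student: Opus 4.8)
\textbf{Proof proposal for Corollary \ref{thm:R21-sign}.}

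The plan is to obtain this as an immediate contrapositive consequence of Theorem \ref{thm:R21}, with no new computation required. First I would note that the hypotheses of the corollary coincide exactly with those of Theorem \ref{thm:R21}: $\gamma$ is a smooth simple closed curve in $\R^{2,1}$ with spacelike curvature vector lying in a complete spacelike hypersurface. In particular, as observed in Section \ref{sec:R21}, the tangent and curvature vectors of $\gamma$ are both spacelike, so the Frenet frame — and hence the torsion $\tau$ as a function along $\gamma$ — is well-defined at every point; this is what makes the phrase ``torsion at a point'' meaningful.

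Next I would argue by contradiction. Suppose $\gamma$ has positive torsion at some point, say $\tau(s_0) > 0$, and suppose toward a contradiction that $\tau$ is \emph{not} negative anywhere, i.e.\ $\tau \ge 0$ on all of $\gamma$. Then $\gamma$ satisfies the nonnegative-torsion hypothesis of Theorem \ref{thm:R21}, which forces $\tau \equiv 0$; this contradicts $\tau(s_0) > 0$. Hence $\tau$ must attain a negative value at some other point. The case where $\gamma$ has negative torsion at a point is identical: if $\tau \le 0$ everywhere, the nonpositive-torsion case of Theorem \ref{thm:R21} gives $\tau \equiv 0$, again a contradiction; alternatively one simply applies the first case to the orientation-reversed curve, for which the torsion changes sign. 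This completes the proof.

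There is essentially no obstacle here — the content is entirely contained in Theorem \ref{thm:R21}, and the only point worth a sentence is checking that the Frenet apparatus is globally defined so that ``torsion at a point'' and ``the other sign at some other point'' are unambiguous. (The analogous deduction of Corollary \ref{thm:R3-sign} from Theorem \ref{thm:R3} is word-for-word the same.)
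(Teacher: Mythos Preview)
Your proposal is correct and matches the paper's approach: the paper presents Corollary \ref{thm:R21-sign} explicitly as an ``obvious corollary'' to Theorem \ref{thm:R21} and gives no further proof, so the contrapositive argument you wrote out is exactly what is intended. The only extra content you added is the remark that the Frenet frame is globally defined, which the paper already established in Section \ref{sec:R21}.
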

Equations (\ref{eqn:avg3}) and (\ref{eqn:avg21}) may be interpreted as saying that the average value of the torsion on the curve $\gamma$ is zero, with respect to a particular choice of positive weighting.  In this manner, one can view these results as a ``weighted total torsion theorem'' (cf. the classical total torsion theorem for curves lying in a sphere \cite{mp}).

Another pair of corollaries comes from considering curves with constant torsion; for the case of Lorentzian $\R^{2,1}$, we describe below connections to general relativity.
\begin{corollary}\label{thm:R3-constant}
Let $\gamma$ be a smooth curve in $\R^3$ which is the graph over a simple closed curve in $\R^2$ with positive curvature.  If $\gamma$ has constant torsion, then $\gamma$ has zero torsion and hence is contained in a plane.  
\end{corollary}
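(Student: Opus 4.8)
The plan is to obtain this as an immediate consequence of Theorem~\ref{thm:R3}. The only thing to observe is that a constant function is, trivially, either everywhere nonnegative or everywhere nonpositive: writing $\tau \equiv \tau_0$, if $\tau_0 \geq 0$ then $\gamma$ has nonnegative torsion, while if $\tau_0 \leq 0$ then $\gamma$ has nonpositive torsion. Either way the hypotheses of Theorem~\ref{thm:R3} are satisfied, so I conclude $\tau \equiv 0$; in particular $\tau_0 = 0$. Since $\gamma$ has nonvanishing curvature (implicit in the torsion being defined, and in any case forced by $\kappa_0 > 0$ via Lemma~\ref{lemma_osc}) and zero torsion, the standard fact that such a curve lies in a plane \cite{docarmo} completes the argument.

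Alternatively, and more in the spirit of this paper, I would argue directly from identity~(\ref{eqn:avg3}): Lemma~\ref{lem:kernel} supplies a positive $2\pi$-periodic $f$ with $f'' + f = 1/\kappa_0$, and then
\[
0 = \int_0^{2\pi} \tau \, w(\theta) \, d\theta, \qquad w(\theta) := \left[ ((\kappa_0 h')')^2 + (\kappa_0 h')^2 + 1 \right] \frac{f}{\kappa_0},
\]
where $w$ is continuous and strictly positive (the bracketed factor is $\geq 1$ and $f/\kappa_0 > 0$). If $\tau$ were a nonzero constant $\tau_0$, the right-hand side would equal $\tau_0 \int_0^{2\pi} w \, d\theta \neq 0$, a contradiction; hence $\tau \equiv 0$, and $\gamma$ lies in a plane as before.

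I do not expect any genuine obstacle here: the corollary is a formal deduction from Theorem~\ref{thm:R3} (equivalently, from~(\ref{eqn:avg3})), the single point worth stating explicitly being that ``constant torsion'' is a special case of ``nonnegative or nonpositive torsion.'' The analogous statement for curves in $\R^{2,1}$ will follow verbatim from the same reasoning applied to identity~(\ref{eqn:avg21}) in place of~(\ref{eqn:avg3}).
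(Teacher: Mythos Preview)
Your proposal is correct and matches the paper's approach: the paper presents this as an ``obvious corollary'' of Theorem~\ref{thm:R3} with no explicit proof, and your first argument (constant torsion is in particular of one sign, so Theorem~\ref{thm:R3} applies) is exactly that deduction. One small quibble: Lemma~\ref{lemma_osc} actually runs in the opposite direction (from the space curve to the projection), so the nonvanishing of $\kappa$ is better justified either by the fact that torsion is defined or by the computation~(\ref{eqn:cross}), which shows $\gamma'\times\gamma''$ has nonzero third component.
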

\begin{corollary}\label{thm:R21-constant}
Let $\gamma$ be a smooth simple closed curve in $\R^{2,1}$ with spacelike curvature vector which lies in a complete spacelike hypersurface.  If $\gamma$ has constant torsion, then $\gamma$ has zero torsion and hence is contained in a plane. 
\end{corollary}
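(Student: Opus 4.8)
The plan is to deduce this corollary directly from Theorem \ref{thm:R21}. First I would observe that a constant torsion is in particular of one sign: if $\tau \equiv c$ with $c \geq 0$ then $\tau \geq 0$ on all of $\gamma$, and if $c \leq 0$ then $\tau \leq 0$ on all of $\gamma$. The remaining hypotheses on $\gamma$ — a smooth simple closed curve in $\R^{2,1}$ with spacelike curvature vector lying in a complete spacelike hypersurface — are exactly the hypotheses of Theorem \ref{thm:R21}. Hence that theorem applies and forces $\tau \equiv 0$. Since the tangent vector and curvature vector of $\gamma$ are both spacelike, the Frenet frame is well-defined along the entire curve, and the trivial modification of the usual argument noted in section \ref{sec:R21} shows that a curve with well-defined Frenet frame and vanishing torsion is contained in a plane; this plane is spacelike since it contains the spacelike osculating planes.

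An alternative, essentially self-contained route bypasses the sign dichotomy and argues directly from identity (\ref{eqn:avg21}). Parametrizing $\gamma$ and its height function $h$ by the angle $\theta$ of the projected tangent as in section \ref{sec:R21}, Lemma \ref{lem:kernel} furnishes a positive $2\pi$-periodic function $f$, and (\ref{eqn:avg21}) states that $\tau$ integrates to zero against the weight $\left[((\kappa_0 h')')^2 + (\kappa_0 h')^2 - 1\right] f/\kappa_0$. This weight is strictly negative, since $f/\kappa_0 > 0$ and the bracket equals $\kappa_0^4 |\gamma'\times\gamma''|^2 < 0$ because $\gamma'\times\gamma''$ is timelike. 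A constant $\tau$ can integrate to zero against a strictly negative weight only if $\tau = 0$, and we conclude as before. One could equally invoke Corollary \ref{thm:R21-sign}: a nonzero constant torsion would have to take the opposite sign somewhere, which is absurd.

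I expect no genuine obstacle here; the corollary is an immediate consequence of Theorem \ref{thm:R21}. The only point worth a sentence of care is that ``constant torsion'' is a meaningful hypothesis, i.e., that the torsion is defined at every point of $\gamma$. This is exactly what the spacelike curvature vector assumption guarantees, and via Lemma \ref{lemma_osc} it also yields $\kappa_0 > 0$ for the projected curve $\gamma_0$, which is what allows the $\theta$-parametrization underlying (\ref{eqn:avg21}) to be set up in the first place.
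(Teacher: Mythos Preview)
Your proposal is correct and matches the paper's treatment: the paper presents this result as an ``obvious corollary'' to Theorem \ref{thm:R21} without giving a separate proof, and your first paragraph is exactly the one-line deduction implicit there. The alternative routes you sketch via identity (\ref{eqn:avg21}) or Corollary \ref{thm:R21-sign} are also valid and equally immediate.
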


In \cite{bray_jauregui} the authors define what it means for a codimension-2 spacelike submanifold of a Lorentzian spacetime to be ``time flat''.  For the case of curves in a (2+1)-dimensional spacetime, time flat is equivalent to constant torsion.  Hence, the previous corollary says that given certain assumptions, time flat curves are contained in planes.  This result supports the choice of terminology: time-flat curves do not bend in timelike directions.
In \cite{bray_jauregui}, we explain how the time flat condition is geometrically natural, along with its importance to understanding the evolution of the 
Hawking mass in general relativity, and describe some interesting conjectures of a purely geometric nature.

\begin{bibdiv}
 \begin{biblist}

\bib{bates_melko}{article}{
   author={Bates, L. M.},
   author={Melko, O. M.},
   title={On curves of constant torsion I},
   journal={J. Geom.},
   volume={104},
   date={2013},
   number={2},
   pages={213--227}
}

\bib{bray_jauregui2}{article}{
   author={Bray, H.},
   author={Jauregui, J.},
   title={On constant torsion curves and time flat surfaces},
   note={Preprint}
}

\bib{bray_jauregui}{article}{
   author={Bray, H.},
   author={Jauregui, J.},
   title={Time flat surfaces and the monotonicity of the spacetime Hawking mass},
   date={2013},
   eprint={http://arxiv.org/abs/1310.8638}
   }

\bib{docarmo}{book}{
   author={do Carmo, M. P.},
   title={Differential geometry of curves and surfaces},
   note={Translated from the Portuguese},
   publisher={Prentice-Hall Inc.},
   place={Englewood Cliffs, N.J.},
   date={1976}
}

\bib{RFS}{article}{
   author={Romero Fuster, M. C.},
   author={Sedykh, V. D.},
   title={On the number of singularities, zero curvature points and vertices
   of a simple convex space curve},
   journal={J. Geom.},
   volume={52},
   date={1995},
   number={1-2},
   pages={168--172}
   }

\bib{ghomi}{article}{
   author={Ghomi, M.},
   title={Tangent lines, inflections, and vertices of closed curves},
   journal={Duke Math. J.},
   volume={162},
   date={2013},
   number={14}
}

\bib{harris}{article}{
   author={Harris, S. G.},
   title={Closed and complete spacelike hypersurfaces in Minkowski space},
   journal={Classical Quantum Gravity},
   volume={5},
   date={1988},
   number={1},
   pages={111--119}
}

\bib{koenigs}{article}{
   author={Koenigs, G.},
   title={Sur la forme des courbes \`a torsion constante},
   journal={Ann. Fac. Sci. Toulouse Sci. Math. Sci. Phys.},
   volume={1},
   date={1887},
   number={2},
   pages={E1--E8}
}

\bib{mp}{book}{
   author={Millman, R. S.},
   author={Parker, G. D.},
   title={Elements of differential geometry},
   publisher={Prentice-Hall Inc.},
   place={Englewood Cliffs, N. J.},
   date={1977}
   }

\bib{sedykh}{article}{
   author={Sedykh, V. D.},
   title={Four vertices of a convex space curve},
   journal={Bull. London Math. Soc.},
   volume={26},
   date={1994},
   number={2},
   pages={177--180}
}

\bib{TU}{article}{
   author={Thorbergsson, G.},
   author={Umehara, M.},
   title={A unified approach to the four vertex theorems. II},
   conference={
      title={Differential and symplectic topology of knots and curves},
   },
   book={
      series={Amer. Math. Soc. Transl. Ser. 2},
      volume={190},
      publisher={Amer. Math. Soc.},
      place={Providence, RI},
   },
   date={1999},
   pages={229--252}
   }

\bib{weiner}{article}{
 author = {Weiner, J.},
 title={Closed curves of constant torsion. II},
 journal={Proc. Amer. Math. Soc.},
 volume={67},
 number={2},
 date={1977},
 pages={306--308}
}
  
 \end{biblist}
\end{bibdiv}

\end{document}